\numberwithin{equation}{section} \setlength{\textwidth}{6.3in}
\newcommand{\A}{{\mathcal{A}}}
\def\rightharpoonupfill@{\arrowfill@\relbar\relbar\rightharpoonup}
\newcommand{\xrightharpoonup}[2][]{\ext@arrow
0359\rightharpoonupfill@{#1}{#2}} \makeatother
\def\dist{\text{dist}}
\let\e=\varepsilon
\def\e{{\varepsilon}}
\def\O{{\Omega}}
\def\o{{\omega}}
\def\A{{\cal A}}
\newtheorem{Theorem}{Theorem}[section]
\newtheorem{Lemma}[Theorem]{Lemma}
\newtheorem{Proposition}[Theorem]{Proposition}
\newtheorem{Definition}[Theorem]{Definition}
\newcommand{{\supess}}{\mathop{\rm ess\: sup }}
\newcommand{{\rr}}{{\mathbb R}}
\newenvironment{@abssec}[1]{%
     \if@twocolumn
       \section*{#1}%
     \else
       \vspace{.05in}\footnotesize
       \parindent .2in
         {\upshape\bfseries #1. }\ignorespaces
     \fi}
     {\if@twocolumn\else\par\vspace{.1in}\fi}
\newcommand\keywordsname{Key words}
\newcommand\AMSname{AMS subject classifications}
\begin{document}

\title{A remark about dimension reduction for supremal functionals: the case with convex domains.}
\author{\textsc{ Elvira Zappale}\thanks{Universita'
degli Studi di Salerno, Via Ponte Don Melillo, 84084 Fisciano (SA) Italy.
E-mail:ezappale@unisa.it}}
\maketitle

\begin{abstract}
\noindent An application of dimensional reduction results for gradient constrained problems is provided for 3D-2D dimension reduction for supremal functionals, in the case when the domain is convex.
 
\medskip

\noindent\textbf{Keywords}:  $\Gamma$-convergence, supremal functionals, gradient constrained problems, dimension reduction.

\noindent\textbf{MSC2010 classification}:  49J45, 74K99, 74Q99.

\end{abstract}

\section{Introduction}

Dimension reduction deals with the study of the asymptotic
behavior of the solutions of a partial differential equation (or a minimization
problem) stated on a domain where one of the dimensions is much smaller than the others. 
This kind of problems has been widely treated  by means of $\Gamma$-convergence analysis. Indeed $\Gamma$-convergence, (see  subsection \ref{Gammaconv} for definitions and \cite{D} for details),
is well suited for dealing with the asymptotic behavior of variational
problems depending on a parameter, since it provides good informations on
the asymptotics of minimizers and of the minimal values.

The aim of this paper is to provide a new proof for a $\Gamma$-convergence result, with respect to uniform convergence, for a family of functionals $H_\e: W^{1,\infty}(\Omega)\to \mathbb R$, of the type
\begin{equation}\label{He}
\displaystyle{H_\e(u):=\supess_{\Omega} W\left(\nabla_\alpha u, \frac{1}{\e}\nabla_3 u\right)},
\end{equation}
where $\Omega \subset \mathbb R^3$ is a bounded open set of cylindrical shape, $W:\mathbb R^3 \to R$ is a  continuous function which satisfies suitable growth assumptions, $\nabla_\alpha u$ and $\nabla_3 u$ stand for the partial derivatives of $u$ with respect to $x_\alpha\equiv(x_1,x_2)$ and $x_3$ respectively.

Functionals $H_\e$ are called supremal functionals  (see subsection \ref{supremal}). They were introduced in \cite{aron4} in order to study the problem of finding the best Lipschitz extension and they are deeply connected with the study of $-\Delta_\infty$ (cf. \cite{ACJ}). The properties of such
functionals proved to be useful in order to give a mathematical model for many physical problems as, for example, the problem of modelling the dielectric breakdown for a composite conductor or for the description of plasticity in polycrystals, cf. \cite{GNP} and \cite{BN}.

It is well known that the asymptotic behaviour of the functionals $\{H_\e\}$ is related to the following minimum problems
$$
\inf \left\{\supess_{\Omega(\varepsilon)} W(\nabla u): \; u \in W^{1,\infty}(\Omega(\varepsilon))\right\},
$$
where $\omega \subset \mathbb R^2$ is a bounded open set, satisfying suitable assumptions that will be specified later, $\Omega(\varepsilon):= \omega \times (-\varepsilon, \varepsilon)$, $W$ is as in \eqref{He} and $\nabla u$ stands for the vector of all the partial derivatives of $u$.

In fact, rescaling the domain $\Omega(\e)$ one is lead to consider 
\begin{equation}\nonumber
\inf\left\{ \supess_\Omega W \left(\nabla_\alpha u, \frac{1}{\varepsilon}\nabla_3 u\right): u \in W^{1,\infty}(\Omega) \right\},
\end{equation}
with $\Omega:= \Omega(1)=\omega \times (-1,1)$.

The main results are contained in the following theorems.
\begin{Theorem}\label{myproof}
Let $\omega \subset \mathbb R^2$ be a convex, bounded open set and let $\Omega:= \omega \times (-1,1)$. Let $W:\mathbb R^3 \to [0,+\infty)$ be a continuous function, and assume that
\begin{equation}\label{Ccoercivity}
\displaystyle{W(\xi )\geq C \|\xi\| \;\;\; \hbox{ for every } \xi \in \mathbb R^3.}
\end{equation}
For every $\e>0$ define
$F_\e: C(\overline \Omega) \to [0, +\infty]$ by
\begin{equation}\nonumber
F_\e(u):=\left\{
\begin{array}{ll}
\displaystyle{
\supess_{x \in \Omega} 
W\left(\nabla_\alpha u(x), \frac{1}{\e} \nabla_3 u(x) \right)} &\hbox{ if }u \in W^{1,\infty}(\Omega),\\
\\
+\infty &\hbox{ otherwise.}
\end{array}
\right.
\end{equation}
Then the family $\{F_\e\}_\e$ $\Gamma(L^\infty)$-converges to the functional $F_0: C(\overline \Omega) \to [0,+\infty]$ given by
\begin{equation}\label{Gammaesssup}
F_0(u):=\left\{\begin{array}{ll}
\displaystyle{\supess_{x_\alpha \in \omega} (W_0)^{\rm lc}(\nabla_\alpha u(x_\alpha))} & \hbox{ if  }u \in W^{1,\infty}(\omega),\\
\\
+\infty &\hbox{otherwise,}
\end{array}
\right.
\end{equation}
where, for every $z \in \mathbb R^2$,
$$
\displaystyle{W_0(z)=\inf_{\zeta \in \mathbb R}W(z, \zeta),}
$$
and $(W_0)^{\rm lc}$ is the level convex envelope of $W_0$,
defined in \eqref{lcenv}.

\end{Theorem}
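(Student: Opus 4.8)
The plan is to exploit the fact that a supremal functional is completely determined by the sublevel sets of its density, and thereby to recast the problem as a dimension-reduction problem for \emph{gradient-constrained} functionals, for which the limiting behaviour is known. For every $t\ge 0$ put
\[
K_t:=\{\xi\in\mathbb R^3:\ W(\xi)\le t\}.
\]
By continuity of $W$ together with the coercivity assumption \eqref{Ccoercivity}, each $K_t$ is compact; the same coercivity makes the family $\{F_\e\}$ equicoercive, so that a sequence $(u_\e)$ with $\sup_\e F_\e(u_\e)<\infty$ is bounded in $W^{1,\infty}(\Omega)$ and, because of the factor $1/\e$ in the third slot, satisfies $\nabla_3 u_\e\to 0$; hence any uniform cluster point is independent of $x_3$ and belongs to $W^{1,\infty}(\omega)$. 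The elementary identity driving everything is that, for $u\in W^{1,\infty}(\Omega)$,
\[
F_\e(u)\le t\quad\Longleftrightarrow\quad \big(\nabla_\alpha u,\tfrac1\e\nabla_3 u\big)\in K_t\ \hbox{ a.e. in }\Omega,
\]
so that $F_\e$ is encoded by the constraint indicators $\chi^\e_{K_t}$ (equal to $0$ when $(\nabla_\alpha u,\tfrac1\e\nabla_3 u)\in K_t$ a.e. and $+\infty$ otherwise) through $F_\e(u)=\inf\{t\ge 0:\ \chi^\e_{K_t}(u)=0\}$.

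First I would fix a level $t$ and apply the dimension-reduction result for gradient-constrained problems to the indicators $\chi^\e_{K_t}$. The expected $\Gamma(L^\infty)$-limit is the indicator $\chi^0_{K_t^0}$ of the reduced constraint $\nabla_\alpha u\in K_t^0$ on $W^{1,\infty}(\omega)$, where
\[
K_t^0:=\overline{\mathrm{co}}\,\pi(K_t),\qquad \pi(z,\zeta):=z,
\]
and $\pi\colon\mathbb R^3\to\mathbb R^2$ is the projection onto the first two coordinates. The liminf inequality is the soft part: if $(\nabla_\alpha u_\e,\tfrac1\e\nabla_3 u_\e)\in K_t$ a.e. and $u_\e\to u$ uniformly, then $\nabla_\alpha u_\e\weakstar\nabla_\alpha u$ while $\tfrac1\e\nabla_3 u_\e$ stays bounded in $L^\infty$; passing to the weak-$*$ limit of the full pair lands it in $\overline{\mathrm{co}}\,K_t$ a.e.\ (weak-$*$ limits of maps valued in a compact set are valued in its closed convex hull), whence $\nabla_\alpha u\in\pi(\overline{\mathrm{co}}\,K_t)=\overline{\mathrm{co}}\,\pi(K_t)$ a.e.\ and $u$ is admissible for the limit.

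Next I would identify $K_t^0$ with a sublevel set of $(W_0)^{\rm lc}$. Since $W$ is continuous and coercive the infimum defining $W_0$ is attained and $\pi(K_t)=\{z:\ W_0(z)\le t\}$; by the level-set description of the level-convex envelope \eqref{lcenv},
\[
\{(W_0)^{\rm lc}\le t\}=\overline{\mathrm{co}}\,\{W_0\le t\}=\overline{\mathrm{co}}\,\pi(K_t)=K_t^0,
\]
so the reduced constraint $\nabla_\alpha u\in K_t^0$ is exactly $(W_0)^{\rm lc}(\nabla_\alpha u)\le t$ a.e. Reassembling the levels through the supremal representation — using $\chi^\e_{K_t}\xrightarrow{\Gamma}\chi^0_{K_t^0}$ for every $t$, the monotone family structure $F_\e(u)=\inf\{t:\chi^\e_{K_t}(u)=0\}$ and its limiting analogue — then yields $F_\e\xrightarrow{\Gamma}F_0$ with $F_0(u)=\supess_{x_\alpha\in\omega}(W_0)^{\rm lc}(\nabla_\alpha u(x_\alpha))$, which is precisely \eqref{Gammaesssup}.

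The hard part will be the recovery sequence for the constrained problem, that is, the bound $\Gamma\text{-}\limsup F_\e(u)\le F_0(u)$: given $u\in W^{1,\infty}(\omega)$ with $(W_0)^{\rm lc}(\nabla_\alpha u)\le t$ a.e., one must construct $u_\e\in W^{1,\infty}(\Omega)$ with $u_\e\to u$ uniformly and $(\nabla_\alpha u_\e,\tfrac1\e\nabla_3 u_\e)\in K_t$ a.e. This means realizing the convex hull $\overline{\mathrm{co}}\,\pi(K_t)$ by fine oscillations, turning prescribed gradient averages into admissible gradients through a laminate-type construction, and using the free $1/\e$ scaling in the $x_3$ direction to absorb the extra component so that the whole triple falls into $K_t$; the convexity of $\omega$ is exactly what guarantees that such Lipschitz functions with the required gradient constraint exist globally on $\omega$ and can be matched near the lateral boundary $\partial\omega\times(-1,1)$ without violating the constraint. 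Rather than rebuild this construction, the plan is to invoke the available dimension-reduction theorem for gradient-constrained problems, which supplies exactly the convergence $\chi^\e_{K_t}\xrightarrow{\Gamma}\chi^0_{K_t^0}$, and to devote the remaining effort to the level-set bookkeeping that converts this family of constraint limits into the single supremal limit $F_0$.
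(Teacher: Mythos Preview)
Your proposal is correct and follows essentially the same strategy as the paper: both encode the supremal functional through the indicators of the sublevel sets $\{W\le t\}$ and then invoke the dimension-reduction result for gradient-constrained integral functionals (Theorem~\ref{thmDAFL-Z}) to produce the recovery sequences. The paper streamlines the execution by first reducing to level-convex $W$ via \cite[Theorem~2.6]{P} and then applying Theorem~\ref{thmDAFL-Z} only at the single level $M=F_0(u)$ for the given target $u$, which bypasses your ``reassembling the levels'' step and the sublevel-set identity $\{(W_0)^{\rm lc}\le t\}=\overline{{\rm co}}\,\pi(K_t)$ altogether.
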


The argument we show in the proof of Theorem \ref{myproof}  relies on some results about dimensional reduction for gradient constrained integral functionals, developed in an unpublished manuscript by R. De Arcangelis. These results are contained in \cite{DAF} and they deal with $\Gamma$-convergence and integral representation in the framework of $BV$-spaces, while the version we present in the theorem below is suited for the applications to the supremal functionals in the continuous setting.

\begin{Theorem}
\label{thmDAFL-Z}
Let $\omega$ be a convex bounded open set in $\mathbb R^2$ and let $\Omega:= \omega \times (-1,1)$. Let $f:\mathbb R^3 \to [0, +\infty]$ be a Borel function, 
Let $F_{f_\e}(\cdot ,\Omega): W^{1,\infty}(\Omega)\to [0, +\infty[$ be the functional defined as
\begin{equation}\label{Ffe}
F_{f_\varepsilon}(u,\omega):=
\displaystyle{\int_{\Omega}f\left(\nabla_\alpha u,\frac{1}{\varepsilon}\nabla_3 u\right)dx,}
\end{equation}
It results that $F_{f_\varepsilon}$ $\Gamma (L^\infty)$- converges, as $\varepsilon \to 0$, to 
$$\overline{F}(u, \omega):=2\int_\omega (f_0)^{\ast \ast}(\nabla_\alpha u)dx_\alpha \hbox{ for every }u \in W^{1,\infty}(\omega),$$ where $f_0$ is defined in \eqref{1.18R} and $(f_0)^{\ast \ast} $ represents its lower semicontinuous and convex envelope as in \eqref{astast}. 
\end{Theorem}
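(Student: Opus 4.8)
The plan is to establish the two inequalities that together constitute $\Gamma(L^\infty)$-convergence: the liminf inequality $\overline F(u,\omega)\le \liminf_\e F_{f_\e}(u_\e,\omega)$ for every $u_\e\to u$ in $L^\infty$, and the existence of a recovery sequence attaining the upper bound. A preliminary remark, used in both halves, is that the limit no longer feels the transverse variable. Because $f$ is gradient-constrained—finite only on a bounded effective domain, by the growth hypotheses suppressed in the displayed statement—any competitor with $F_{f_\e}(u_\e,\omega)<\infty$ has the pair $\bigl(\nabla_\alpha u_\e,\tfrac1\e\nabla_3 u_\e\bigr)$ confined a.e.\ to that domain. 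Hence $\nabla_\alpha u_\e$ is equibounded in $L^\infty$ and so is $\tfrac1\e\nabla_3 u_\e$, which forces $\nabla_3 u_\e\to 0$; combined with $u_\e\to u$ uniformly this yields $\nabla_\alpha u_\e\weakstar\nabla_\alpha u$ with $u$ independent of $x_3$, so $u$ is identified with an element of $W^{1,\infty}(\omega)$. If no energy bound holds the liminf is $+\infty$ and there is nothing to prove. The factor $2=|(-1,1)|$ in $\overline F$ merely records the $x_3$-integration.

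For the liminf inequality I would argue by pointwise minorization followed by convex lower semicontinuity. For any admissible competitor,
\[
f\!\left(\nabla_\alpha u_\e,\tfrac1\e\nabla_3 u_\e\right)\ \ge\ f_0(\nabla_\alpha u_\e)\ \ge\ (f_0)^{\ast\ast}(\nabla_\alpha u_\e),
\]
the first inequality by the definition of $f_0$ as the infimum of $f$ over the transverse slot, the second because $(f_0)^{\ast\ast}$ is the largest convex lower semicontinuous minorant of $f_0$. Integrating over $\Omega$ and using $\nabla_\alpha u_\e\weakstar\nabla_\alpha u$, the convexity and lower semicontinuity of $(f_0)^{\ast\ast}$ give $\liminf_\e\int_\Omega (f_0)^{\ast\ast}(\nabla_\alpha u_\e)\,dx\ge\int_\Omega (f_0)^{\ast\ast}(\nabla_\alpha u)\,dx=2\int_\omega (f_0)^{\ast\ast}(\nabla_\alpha u)\,dx_\alpha$, which is exactly $\overline F(u,\omega)$.

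The recovery sequence is the substantive part, and I would build it in three stages. First, for $u$ affine with $\nabla_\alpha u\equiv z$, pick $\zeta_\delta$ nearly optimal in $f_0(z)=\inf_\zeta f(z,\zeta)$ and set $u_\e(x)=u(x_\alpha)+\e\,\zeta_\delta\,x_3$; then $\nabla_\alpha u_\e=z$ and $\tfrac1\e\nabla_3 u_\e=\zeta_\delta$, so $F_{f_\e}(u_\e,\omega)\to 2|\omega|\,f(z,\zeta_\delta)$ while $u_\e\to u$ uniformly since $\e\,\zeta_\delta\,x_3\to 0$; letting $\delta\to 0$ reaches $2|\omega|\,f_0(z)$. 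Second, to descend from $f_0$ to $(f_0)^{\ast\ast}$ I would use a lamination argument: writing $(f_0)^{\ast\ast}(z)=\sum_i\lambda_i f_0(z_i)$ with $\sum_i\lambda_i z_i=z$ by Carath\'eodory, I oscillate $\nabla_\alpha u_\e$ among the values $z_i$ on a fine partition of $\omega$ (using the corresponding near-optimal transverse slopes on each cell) so that the spatial average of the densities produces $(f_0)^{\ast\ast}(z)$. Third, a general $u\in W^{1,\infty}(\omega)$ is approximated by piecewise affine functions, the construction is localized on each piece, and the pieces are recombined by a diagonal argument, the liminf inequality already proved serving to pass to the limit.

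The main obstacle I anticipate is this upper bound, for two coupled reasons. Since $f$ may equal $+\infty$, every competitor in the affine and lamination steps must keep $\bigl(z_i,\tfrac1\e\nabla_3 u_\e\bigr)$ inside the finiteness (gradient-constraint) set $\{\,f<+\infty\,\}$, so the slopes $\zeta_\delta$ and the admissible directions $z_i$ in the Carath\'eodory decomposition are correspondingly restricted. This is precisely where the convexity of $\omega$ is essential: for gradient-constrained functionals the attainable affine data and the validity of the plain convex-envelope relaxation formula—without a geometric correction arising from the shape of the cross-section—rest on $\omega$ being convex, as in the underlying results of \cite{DAF}. The secondary difficulty, reconciling the weakly oscillating gradients of the laminate with the uniform convergence demanded by the $L^\infty$ topology, is handled by the standard device of making the oscillatory corrector's primitive uniformly small, so that only the functions, not their gradients, are required to converge strongly.
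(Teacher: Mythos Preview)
Your overall architecture---liminf by pointwise minorization $f\ge f_0\ge (f_0)^{**}$ followed by convex lower semicontinuity, limsup first on affine data and then extended---is the paper's architecture. Two remarks, one minor and one substantive.

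\medskip
\textbf{Lower bound.} You pass to the liminf via weak$^*$ convergence of $\nabla_\alpha u_\e$, which you obtain from a boundedness-of-$\mathrm{dom}\,f$ hypothesis you have added (``the growth hypotheses suppressed in the displayed statement''). No such hypothesis is present; the paper in fact stresses that the liminf ``does not involve any technical assumption on $f$''. Its argument bypasses gradient compactness: slice in $x_3$, note that $u_\e(\cdot,x_3)\to u$ in $L^1(\omega)$ from uniform convergence, apply the $L^1_{\mathrm{loc}}$-lower semicontinuity of $v\mapsto\int_\omega (f_0)^{**}(\nabla v)\,dx_\alpha$ (Proposition~\ref{Proposition 1.4R}), and finish with Fatou in $x_3$. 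Your version proves a weaker statement.

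\medskip
\textbf{Upper bound.} Your affine step and the lamination from $f_0(z)$ down to $\mathrm{co}\,f_0(z)$ are essentially the Zig-Zag construction the paper invokes (Lemma~\ref{Lemma 3.4R}). The genuine gap is the passage from affine to piecewise affine, which you describe as ``localize on each piece and recombine by a diagonal argument''. For integrands that take the value $+\infty$ this is exactly the hard step: any cut-off or partition-of-unity gluing produces, on the transition layers, convex combinations of the gradients of the local competitors, which need not lie in $\mathrm{dom}\,f$; the glued function then has infinite energy and is useless as a competitor. You correctly flag that convexity of $\omega$ is essential here, but you do not supply the mechanism. The paper's device is the max--min representation (Lemma~\ref{comeLemma3.6}): on a convex $\omega$ every piecewise affine $u$ can be written as $u=\max_i\min_{j\in N_i}(u_{z_j}+s_j)$, and one shows abstractly that $F''$ behaves well under pointwise $\max$ and $\min$. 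Recovery sequences for the affine pieces are then combined by taking their $\max/\min$, an operation under which the gradient equals a.e.\ one of the constituent gradients, so the constraint is preserved pointwise with no transition layer at all. This idea is what your outline is missing. Likewise, the step from piecewise affine to $W^{1,\infty}$ is not merely approximation plus diagonalization: to make the energies of the approximants controlled by $\overline F(u,\omega)$ the paper goes through $C^1$, uses a mollification estimate (Proposition~\ref{Proposition 1.6R}), and finally a dilation $u_t(x_\alpha)=u\bigl(x_0+\tfrac{x_\alpha-x_0}{t}\bigr)$ together with the convexity of $(f_0)^{**}$ and of $\omega$ (Lemmas~\ref{Lemma 3.9CCEDA} and~\ref{Theorem 3.10R}) to close the gap between $F''_-$ and $F''$.
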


We stress the fact that  $F_{f_\varepsilon}(u,\Omega)$ is finite if and only if $\nabla u \in {\rm dom}f_\varepsilon$ a.e. in $\Omega$, i.e. 
\begin{equation}\label{remunb}
\left(\nabla_\alpha u, \frac{1}{\varepsilon}\nabla_3 u\right) \in {\rm dom }f \hbox{ a.e. in } \Omega.
\end{equation}

Finally we underline that the target of our contribution consists in showing that also in the case of dimension reduction  it is exhibited the deep relation existing between unbounded functionals and supremal functionals (see section \ref{unbounded}), as emphasized in \cite{BJW99} and exploited in \cite{BGP}.

The paper is organized as follows: section \ref{preliminaries} is devoted to preliminary results of Convex Analysis, measure thoery, $\Gamma$- convergence and supremal functionals. In section \ref{unbounded} we present the results, first we treat  the integral representation of unbounded functionals, then in subsection \ref{thm1.1} we prove theorem \ref{myproof}.

\section{Preliminaries} \label{preliminaries}

We start by fixing notations and recalling results from Convex Analysis.

\noindent
$N \in \mathbb N$ is fixed.
${\cal A}_0(\mathbb R^N)$ denotes the sets of the bounded open subsets of ${\mathbb R}^N$, and
${\cal L}^N$
is the Lebesgue measure on $\mathbb R^N$.

For a given $S\subseteq {\mathbb R}^N$ we denote by ${\it aff} (S)$ 
the affine hull of ${\it aff} S$, defined as the 
intersection of all the affine
sets containing $S$. It is clear that
${\it aff} (S)$ is the smallest affine set containing $S$.

For every $S\subseteq {\mathbb R}^N$ we denote by ${\rm co}(S)$ the 
convex hull of $S$, i.e. the intersection
of all the convex subsets of
$\mathbb R^N$ containing $S$. It is clear that ${\rm co}(S)$ 
is the smallest convex set containing $S$.

If $C\subseteq{\mathbb R}^N$ is convex, we denoteby ${\rm ri }(C)$ 
the relative interior of $C$, i.e. the set of the 
interior points of $C$,
in the topology of ${\it aff}(C)$, once we regard it as a subspace of
${\it aff}(C)$.
We recall that ${\rm ri}(C)\not=\emptyset $ provided $C\not=\emptyset$. When ${\it aff}(C)=\mathbb R^N$ we
write as usual ${\rm ri} (C)={\rm int}(C)$.
Moreover, we also recall that
\begin{equation}
\label{1.1R}
tz+(1-t)z_0\in{\rm ri}(C)\hbox{ whenever 
}z_0\in{\rm ri}(C),\ z\in{\overline C},\hbox{ and 
}t\in[0,1[.
\end{equation}


For every $g:\mathbb R^N\to[0,+\infty]$ we set ${\rm dom} g=\{z\in\mathbb R^N : g(z)<+\infty\}$.

Let $g:\mathbb R^N\to[0,+\infty]$ be convex. Then it is 
well known that ${\rm dom} g$ is convex, that $g$ is 
lower semicontinuous in
${\rm ri}({\rm dom} g)$, and that the restriction of $g$ to 
${\rm ri}({\rm dom} g)$ is continuous. In particular, if 
${\rm int}({\rm dom} g)\not=\emptyset$, then $g$ is
continuous in ${\rm int}({\rm dom} g)$.

For every $g:\mathbb R^N\to[0,+\infty]$ we denote by ${\rm co} 
g$ the convex envelope of $g$, i.e. the function
$${\rm co} g: z\in\mathbb R^N\mapsto\sup\{ 
\phi(z), \phi:\mathbb R^N\to[0,+\infty]: \phi\leq 
g, \phi \hbox{ convex}\}.$$
Clearly, ${\rm co} g$ is convex, and ${\rm co} g(z)\leq 
g(z)$ for every $z\in\mathbb R^N$. Consequently, ${\rm co} g$ 
turns out to be the greatest convex function
on
$\mathbb R^N$ less than or equal to $g$.


For every $g:\mathbb R^N\to[0,+\infty]$ we denote by
$g^{**}$ the convex lower semicontinuous envelope 
of $g$, i.e. the function defined by
\begin{equation}\label{astast}
g^{\ast \ast}(z)=\sup\{\phi(z), \phi :\mathbb R^N\to[0,+\infty], \phi \leq g, \phi \hbox{ convex 
and lower semicontinuous}\}.
\end{equation}
Clearly, $g^{**}$ is convex and lower 
semicontinuous, and $g^{**}(z)\leq g(z)$ for every 
$z\in\mathbb R^N$. Consequently, $g^{**}$ turns out to be
the greatest convex lower semicontinuous function on
$\mathbb R^N$ less than or equal to $g$.

\begin{Proposition}\label{Proposition 1.3R} Let 
$g:\mathbb R^N\to[0,+\infty]$. Then ${\rm ri}({\rm dom}
(g^{**}))={\rm ri}({\rm dom}({\rm co} g))={\rm ri}({\rm co}({\rm dom} g)$, and
$$g^{**}(z)={\rm co} g(z)\hbox{ for every 
}z\in{\rm ri}({\rm co}({\rm dom} g))\cup(\mathbb R^N\setminus\overline{\rm co}({\rm dom} 
g)).$$
\end{Proposition}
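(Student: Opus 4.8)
The plan is to reduce the statement to two facts: the domain identity ${\rm dom}({\rm co}\, g)={\rm co}({\rm dom}\, g)$, and the invariance of both ${\rm ri}({\rm dom})$ and the values on ${\rm ri}({\rm dom})$ under lower semicontinuous regularization of a proper convex function. If ${\rm dom}\, g=\emptyset$ then $g\equiv+\infty$, hence ${\rm co}\, g=g^{**}\equiv+\infty$, all three sets in the first assertion are empty, and the pointwise identity holds on all of $\mathbb R^N=\mathbb R^N\setminus\overline{\rm co}(\emptyset)$; so I assume ${\rm dom}\, g\ne\emptyset$. For the domain identity I would use the representation
$${\rm co}\, g(z)=\inf\Big\{\textstyle\sum_{i=1}^{N+1}\lambda_i g(z_i):\ z=\sum_{i=1}^{N+1}\lambda_i z_i,\ \lambda_i\ge0,\ \sum_i\lambda_i=1\Big\},$$
valid because the right-hand side is convex, is bounded above by $g$, and dominates every convex minorant of $g$ (Carathéodory bounding the number of points by $N+1$). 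Finiteness of ${\rm co}\, g(z)$ then means exactly that $z$ is a convex combination of points of ${\rm dom}\, g$, i.e. $z\in{\rm co}({\rm dom}\, g)$; thus ${\rm dom}({\rm co}\, g)={\rm co}({\rm dom}\, g)$, and the second required equality of relative interiors becomes literal.

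Next I would set $\phi:={\rm co}\, g$, which is proper since $0\le\phi\le g$ and ${\rm dom}\, g\ne\emptyset$, and identify $g^{**}$ with the lower semicontinuous envelope $h$ of $\phi$. Indeed ${\rm epi}\, h=\overline{{\rm epi}\,\phi}$ is closed and convex, so $h$ is convex and lower semicontinuous; moreover every convex lower semicontinuous minorant of $g$ minorizes $\phi$, hence, being itself lower semicontinuous, minorizes $h$, so that $h$ is the greatest such minorant, i.e. $h=g^{**}$. From ${\rm epi}\, h=\overline{{\rm epi}\,\phi}$ one reads off ${\rm dom}\,\phi\subseteq{\rm dom}\, h\subseteq\overline{{\rm dom}\,\phi}$, the right inclusion because $\phi\equiv+\infty$ near any point outside $\overline{{\rm dom}\,\phi}$. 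As ${\rm dom}\,\phi$ and ${\rm dom}\, h$ are convex sets trapped between a convex set and its closure, they share one relative interior, which yields ${\rm ri}({\rm dom}(g^{**}))={\rm ri}({\rm dom}({\rm co}\, g))$ and closes the chain of equalities.

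For the pointwise identity I would split $\mathbb R^N$ into the two stated regions. On ${\rm ri}({\rm co}({\rm dom}\, g))={\rm ri}({\rm dom}\,\phi)$ the convex function $\phi$ is continuous relative to ${\it aff}({\rm dom}\,\phi)$ and equals $+\infty$ off that affine hull, so $\liminf_{y\to z}\phi(y)=\phi(z)$ and therefore $g^{**}(z)=h(z)=\phi(z)={\rm co}\, g(z)$. On $\mathbb R^N\setminus\overline{\rm co}({\rm dom}\, g)=\mathbb R^N\setminus\overline{{\rm dom}\,\phi}$ both $\phi$ and $h$ are identically $+\infty$, so again $g^{**}={\rm co}\, g$; together these give the identity on the whole set specified in the statement.

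The delicate step is the comparison between ${\rm co}\, g$ and $g^{**}$ across the relative interior of the domain: one must guarantee that lower semicontinuous regularization of $\phi$ changes neither ${\rm ri}({\rm dom}\,\phi)$ nor the values of $\phi$ there. Both of these rest on the continuity of a convex function on the relative interior of its domain (recalled in the preliminaries) together with the relative-interior calculus encoded in \eqref{1.1R}, which is precisely what forces a convex set and any set squeezed between it and its closure to share a single relative interior.
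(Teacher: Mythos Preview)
Your argument is correct. The paper does not supply its own proof of this proposition: immediately after the statement it simply writes ``We refer to \cite{CDA} for more details,'' deferring to the Carbone--De Arcangelis monograph. Your route---first establishing ${\rm dom}({\rm co}\,g)={\rm co}({\rm dom}\,g)$ via the convex-combination representation of ${\rm co}\,g$, then identifying $g^{**}$ with the lower semicontinuous envelope of ${\rm co}\,g$, and finally using that lsc regularization of a proper convex function leaves both ${\rm ri}({\rm dom})$ and the values on ${\rm ri}({\rm dom})$ unchanged---is the standard one and is essentially what one finds in \cite{CDA}. One cosmetic remark: in the Carath\'eodory representation of ${\rm co}\,g$ the sharp number of terms is in general $N+2$ (the epigraph lives in $\mathbb R^{N+1}$), though the epigraph structure allows the refinement to $N+1$; in any case this count is irrelevant to your domain identity, which only needs some finite convex combination.
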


We refer to \cite{CDA} for more details.

\noindent
A function $g:\mathbb R^N \to (-\infty, + \infty]$ is said to be {\it level convex} if for every $t \in \mathbb R$, the level set $\{z\in \mathbb R^N: g(z)\leq t\}$ is convex. This property can be equivalently stated as follows: for every $\lambda \in [0,1], z_1,z_2 \in \mathbb R^N$,
$$
g(\lambda z_1 + (1-\lambda)z_2) \leq \max\{g(z_1) , g(z_2)\}. 
$$  
The level convex envelope of a function $g:\mathbb R^N \to ]-\infty, +\infty]$ is defined as
\begin{equation}\label{lcenv}
g^{\rm lc}(z):=\sup\{h(z), h:\mathbb R^N\to ]-\infty;+\infty]: h\leq g, h \hbox{ lower semicontinuous, level convex.} \}
\end{equation}



For every $E\subseteq\mathbb R^N$, 
we denote by $I_E$ the indicator function of $E$ defined as $I_E(x)=0$ if $x \in E$ and $I_E(x)=+\infty$ if $x \in \mathbb R^N \setminus E$.

Let $z \in \mathbb R^N$, we denote by $u_z$ the linear function, defined in $\mathbb R^N$, whose gradient is $z$.

\bigskip



Let $\theta :{\cal A}_0(\mathbb R^N)\to[0,+\infty]$. We say that $\theta$ is increasing if
$$\theta(\O_1)\leq\theta(\O_2)\hbox{ for every }\O_1,\ 
\O_2\in{\cal A}_0(\mathbb R^N)\hbox{ such that }\O_1\subseteq\O_2.$$
We denote by $\theta_-$ the inner regular envelope of $\theta$ defined by
\begin{equation}\label{innreg}
\theta_-: 
\O\in{\cal A}_0(\mathbb R^N)\mapsto\sup\left\{\theta(A) : 
A\in{\cal A}_0(\mathbb R^N), {\overline A}\subseteq\O\right\},
\end{equation}
and say that $\theta$ is inner regular in $\O\in{\cal A}_0(\mathbb R^N)$ if
$$\theta(\O)=\theta_-(\O).$$
It is clear that $\theta_-$ is increasing, and that 
it is inner regular in $\Omega$ for every 
$\Omega\in{\cal A}_0({\mathbb R}^N)$.

Hereafter, if $U$ is a set, $\Phi:
U \times {\cal A}_0({\mathbb R}^N) \to[0,+\infty]$, and $u\in U$, we denote by 
$\Phi_-(u,\cdot)$ the inner regular
envelope of $\Phi(u,\cdot)$, namely the function 
defined, for every $\Omega \in{\cal A}_0({\mathbb R}^N)$, by 
$\Phi_-(u,\Omega)=\Phi(u,\cdot)_-(\Omega)$.

\subsection{$\Gamma$-convergence}\label{Gammaconv}

\begin{Definition} \label{seqcharacsupinf}
Let  $X$ be a metric space, and $F_n : X \to (-\infty,+\infty]$ be a sequence of functions. We denote by 
\begin{equation}\label{igamma}F'(x):=\inf \left\{ \liminf_{n \to \infty} F_n(x_n) : x_n \to x \, \hbox{in } X\right\}
\end{equation}
the {\rm $\Gamma$-lower limit}, or more shortly the
{\rm $\Gamma$-liminf} of the sequence $\{F_n\}$. Similarly, we denote by 
\begin{equation}\label{iigamma}
F''(x):=\inf\left\{\limsup_{n \to \infty} F_n(x_n) : x_n \to x \, \hbox{in } X\right\}
\end{equation}
the {\rm $\Gamma$-upper limit}, or more shortly the
{\rm $\Gamma$-limsup} of the sequence $\{F_n\}$.
When $F'=F''=F$, we say that $F$ is the $\Gamma$-limit of the sequence $\{F_n\}$, and it is characterized by the
following properties:
\begin{itemize}
\item[(i)] for every $x \in X$ and for every sequence $(x_n)$
converging to $x$ in $X$, then
$$F(x)\leq \liminf_{n \to \infty}F_n(x_n); $$
\item[(ii)]for every $x \in X$ there exists a sequence $(\overline x_n)$ (called a {\rm recovering sequence}) converging to $x$ in $X$   such that
$$F(x)= \lim_{n \to \infty} F_n(\overline x_n).  $$
\end{itemize}
\end{Definition}
\begin{Definition}
[$\Gamma$-convergence for a family of functionals]\label{gammafamily} 
Let $X$ be a metric space, and $\{F_\e\} : X \to (-\infty,+\infty]$ be a family of functionals. Definition \ref{seqcharacsupinf} can be extended to $\{F_\e\}$ if \eqref{igamma} and \eqref{iigamma} hold for every sequence $\{\e_n\}$ extracted by the family $\{\e\}$, with $\e_n \to 0$.

\end{Definition}



\bigskip
\subsection{Supremal functionals}\label{supremal}
Let $\Omega \subset \mathbb R^N$ be an open bounded domain. 
A supremal (localized) functional on  $W^{1,\infty}(\Omega)$ is a functional of the form
\begin{equation}\label{Fsup}
F(u,A):=\supess_{x \in A}f(\nabla u(x)),
\end{equation}
where $u \in W^{1,\infty}(\Omega)$ and $A$ is any open subset of $\Omega$. 
The function $f$ which represents the functional is called {\it supremand}. 

As observed in \cite{BJW99}, a necessary and sufficient condition for the lower semicontinuity of a supremal functional of the type \eqref{Fsup} with respect to the weak* topology of $W^{1,\infty}(\Omega)$ is the lower semicontinuity and level convexity of the supremand $f$.

A general relaxation results for supremal functionals (see \cite[Theorem 2.3]{BPZ}), with a Carath\'eodory supremand, with dependence on $x$ and $u$, has been proved in \cite{P} and involves the lower semicontinuous and level convex envelope of the supremand $f$ (c.f. formula \eqref{lcenv}).
 
\section{Integral representation for gradient constrained functionals arising in dimension reduction}\label{unbounded}

This section contains the main steps to the  proof of Theorem \ref{thmDAFL-Z}.

\noindent Here and in the remainder of this section let $\omega \in {\cal A}_0(\mathbb R^2)$ and $\Omega:=\omega \times (-1,1)$. We will write any $x \in \Omega:=\omega \times (-1,1)$
as $(x_\alpha, x_3)$, where $\alpha$ stands for $1$ and $2$.
Moreover $d {\cal L}^3$ will be denoted also as $dx$ or $dx_\alpha d x_3$. 

If $u$ is a Sobolev function, we denote by $\nabla u$ 
the vector of all the partial derivatives of $u$ 
with respect to its variables. If $u$ is defined 
on an
open subset of ${\mathbb R}^3$, we denote by
$\nabla_\alpha u$ the vector of the first
$2$ components of
$\nabla u$ and by
$\nabla_3 u$ the last component.




Let $f:\mathbb R^3\to[0,+\infty]$ be a Borel function. For every $\varepsilon >0$, $\omega\in{\cal A}_0(\mathbb R^2)$  and $u \in W^{1,\infty}(\Omega)$ we define $F_{f_\e}$ as in \eqref{Ffe}.



We set
\begin{equation}\label{1.13'R}
F'(\cdot,\o): u \in W^{1,\infty}(\Omega)\to\Gamma ({L^\infty}) -\liminf_{\e \to 0}F_{f_\e}(u,\o),
\end{equation}

\begin{equation}\label{1.13''R}
F''(\cdot,\o): u \in W^{1,\infty}(\Omega)\to\Gamma (L^\infty)-\limsup_{\e \to 0}F_{f_\e}(u,\o).
\end{equation}

\noindent We recall that, from Definitions \ref{seqcharacsupinf} and \ref{gammafamily} for every $\o\in{\cal A}_0(\mathbb R^2)$, 
$F'(\cdot,\o)$ and $F''(\cdot,\o)$ are 
lower 
semicontinuous with respect to the uniform convergence.
Obviously,
$$
F'(u,\o)\leq 
F''(u,\o)\hbox{ for every }u\in W^{1,\infty}(\Omega).
$$

Finally we set
\begin{equation}\label{1.18R}\displaystyle{f_0: z\in{\mathbb R}^2\mapsto\inf_{\zeta\in\mathbb R}f(z,\zeta).}
\end{equation}
We define 
$f_0^{**}=(f_0)^{**}$. 

It is clear that
\begin{equation}\label{1.30R}
{\rm dom} f_0={\rm Pr}_2{\rm dom} f,
\end{equation}
where ${\rm Pr}_2$ is the projection operator from $\mathbb R^3$ to ${\mathbb R}^2$.

By \eqref{1.30R}, once we recall that the operators 
${\rm co}$ and ${\rm Pr}_2$ commute, we deduce that
\begin{equation}\label{1.31R}
{\rm Pr}_2{\rm co}({\rm dom} f)\subseteq{\rm dom} f_0^{**}\subseteq\overline{{\rm Pr}_2{\rm co}({\rm dom} f)}.
\end{equation}

Nest we state some auxiliary results.
 
\noindent If $g:\mathbb R^2\to[0,+\infty]$ is convex and lower 
semicontinuous, we define the functional $F_g$ as
\begin{equation}\label{1.7R}
F_g:(u,\omega)\in  
W^{1,1}_{\rm loc}(\mathbb R^2)\times {\cal A}_0(\mathbb R^2)\mapsto\int_\omega g(\nabla u)dx.
\end{equation}

The proof of the following result can be found in \cite{CDA}.
\begin{Proposition}\label{Proposition 1.4R} Let 
$g:\mathbb R^2\to[0,+\infty]$ be convex and lower 
semicontinuous, and let $F_g$ be defined in \eqref{1.7R}. Then, for every $\omega\in{\cal A}_0(\mathbb R^2)$, $F_g(\cdot,\omega)$ is $L^1_{\rm loc}(\omega)$-lower
semicontinuous.
\end{Proposition}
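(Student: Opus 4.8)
The plan is to exploit the classical duality representation of a nonnegative convex lower semicontinuous integrand as a supremum of affine functions, and then to transfer lower semicontinuity from these affine pieces to $F_g$ by an integration-by-parts argument that only uses $L^1_{\rm loc}$ convergence of the functions. First I would record that, since $g:\mathbb R^2\to[0,+\infty]$ is convex and lower semicontinuous, there exists a countable family $\{(a_i,b_i)\}_{i\in\mathbb N}\subset\mathbb R^2\times\mathbb R$ with $g(\xi)=\sup_{i\in\mathbb N}(a_i\cdot\xi+b_i)$ for every $\xi\in\mathbb R^2$. Setting $g_k(\xi):=\max_{1\le i\le k}(a_i\cdot\xi+b_i)$, the functions $g_k$ increase pointwise to $g$, so by monotone convergence $F_g(u,\omega)=\sup_k F_{g_k}(u,\omega)$ for every $u\in W^{1,1}_{\rm loc}(\mathbb R^2)$. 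Because $g_k\le g$ gives $F_{g_k}(u_n,\omega)\le F_g(u_n,\omega)$, it suffices to prove that each $F_{g_k}(\cdot,\omega)$ is $L^1_{\rm loc}(\omega)$-lower semicontinuous: taking the supremum in $k$ afterwards recovers the statement for $g$.

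So fix $g=\max_{1\le i\le k}\ell_i$ with $\ell_i(\xi)=a_i\cdot\xi+b_i$, and let $u_n\to u$ in $L^1_{\rm loc}(\omega)$ with $u_n,u\in W^{1,1}_{\rm loc}(\mathbb R^2)$; I may assume $\liminf_n F_g(u_n,\omega)<+\infty$ and, passing to a subsequence, that the $\liminf$ is a limit. The key step is a localization. Choose any $\phi_1,\dots,\phi_k\in C^\infty_c(\omega)$ with $\phi_i\ge 0$ and $\sum_{i}\phi_i\le 1$. Since $\ell_i\le g$ and $g\ge 0$, one has $\sum_i\phi_i\ell_i(\nabla u_n)\le g(\nabla u_n)$ pointwise, whence $\int_\omega g(\nabla u_n)\,dx\ge\sum_i\int_\omega\phi_i\ell_i(\nabla u_n)\,dx$. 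Integrating the gradient term by parts gives $\int_\omega\phi_i\,a_i\cdot\nabla u_n\,dx=-\int_\omega u_n\,a_i\cdot\nabla\phi_i\,dx$, and since $\nabla\phi_i$ is bounded with compact support while $u_n\to u$ in $L^1_{\rm loc}$, this converges to $-\int_\omega u\,a_i\cdot\nabla\phi_i\,dx=\int_\omega\phi_i\,a_i\cdot\nabla u\,dx$; the constant contributions $b_i\int_\omega\phi_i\,dx$ are independent of $n$. Passing to the limit I obtain $\liminf_n F_g(u_n,\omega)\ge\sum_i\int_\omega\phi_i\ell_i(\nabla u)\,dx$.

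It remains to take the supremum over all admissible tuples $(\phi_1,\dots,\phi_k)$ and to identify it with $F_g(u,\omega)$, i.e. to establish the localization identity $\int_\omega\max_i\ell_i(\nabla u)\,dx=\sup\{\sum_i\int_\omega\phi_i\ell_i(\nabla u)\,dx\}$. This is the main obstacle: one must approximate the indicators of a measurable partition $\{E_i\}$ of $\omega$, with $E_i\subseteq\{x:\ell_i(\nabla u(x))=\max_j\ell_j(\nabla u(x))\}$ obtained through a measurable selection that breaks ties, by nonnegative continuous functions satisfying $\sum_i\phi_i\le 1$. I would carry this out using Lusin's theorem together with the inner regularity of the Lebesgue measure, approximating each $\chi_{E_i}$ from below by compactly supported continuous functions on sets of nearly full measure and controlling the error through the local integrability of $\nabla u$, with a standard truncation handling the region where $\nabla u$ is large. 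Combining the localization identity with the inequality of the previous paragraph yields $\liminf_n F_g(u_n,\omega)\ge F_g(u,\omega)$, which, via the reduction of the first paragraph, proves the proposition.
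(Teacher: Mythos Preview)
The paper does not give its own proof of this proposition; it simply refers to \cite{CDA}. Your argument is the classical Serrin--De~Giorgi direct proof via affine minorants and integration by parts, and it is essentially correct and self-contained, which is more than the paper provides here.

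One small point deserves care. In the second paragraph you write ``since $\ell_i\le g$ and $g\ge 0$'' with $g$ now standing for $g_k=\max_{1\le i\le k}\ell_i$; but nothing guarantees $g_k\ge 0$, and without this the inequality $\sum_i\phi_i\ell_i(\nabla u_n)\le g_k(\nabla u_n)$ can fail on the set where $\sum_i\phi_i<1$. The fix is immediate: since the original $g\ge 0$, one may include the zero function among the $\ell_i$ (say $\ell_1\equiv 0$), so that every $g_k\ge 0$; this also makes the monotone convergence step in the first paragraph clean. With that adjustment your localization identity and the concluding supremum argument go through exactly as you sketch, using absolute continuity of the integral of $|\nabla u|\in L^1(\omega)$ (note that $\omega$ is bounded and $u\in W^{1,1}_{\rm loc}(\mathbb R^2)$ gives $\nabla u\in L^1(\omega)$, so no extra truncation is needed).
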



The following approximation in energy result for 
$F_g$ holds (cf. \cite[Lemma 7.4.4]{CDA}). Here and 
in what follows for every
$\o\in{\cal A}_0(\mathbb R^2)$ and $\eta>0$, we set
$\o^-_\eta=\{x\in\omega : \dist(x,\partial \o)>\eta\}$.

\begin{Proposition}\label{Proposition 1.6R} Let 
$g:\mathbb R^2\to[0,+\infty]$ be convex and lower 
semicontinuous, and let $F_g$ be defined in \eqref{1.7R}. Then
$$F_g(u_\eta, \o^-_\eta)\le F_g(u,\o)\hbox{ for 
every }\o\in\A_0(\mathbb R^2),\ u\in W^{1,1}_{\rm loc}(\mathbb R^2),\hbox{ 
and }\eta>0,$$
\end{Proposition}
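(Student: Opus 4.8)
The plan is to realize $u_\eta$ as the mollification $u_\eta=\rho_\eta*u$, where $\rho_\eta$ is a standard nonnegative mollifier supported in the closed ball $\overline{B(0,\eta)}$ with $\int_{\mathbb R^2}\rho_\eta\,dx=1$, and to exploit the convexity of $g$ through Jensen's inequality. First I would observe that for $x\in\o^-_\eta$ the closed ball $\overline{B(x,\eta)}$ is contained in $\o$, so that $u_\eta(x)$ is determined by the values of $u$ in $\o$ and, since $u\in W^{1,1}_{\rm loc}(\mathbb R^2)$, the gradient commutes with the convolution:
$$\nabla u_\eta(x)=\int_{\mathbb R^2}\rho_\eta(y)\,\nabla u(x-y)\,dy\qquad\hbox{for every }x\in\o^-_\eta.$$
Thus $\nabla u_\eta(x)$ is the average of the values $\nabla u(x-y)$ against the probability measure $\rho_\eta(y)\,dy$.

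Next, since $g$ is convex and lower semicontinuous (hence Borel measurable, so that $g\circ\nabla u$ and $g\circ\nabla u_\eta$ are measurable), Jensen's inequality yields, for a.e. $x\in\o^-_\eta$,
$$g(\nabla u_\eta(x))=g\!\left(\int_{\mathbb R^2}\rho_\eta(y)\,\nabla u(x-y)\,dy\right)\le\int_{\mathbb R^2}\rho_\eta(y)\,g(\nabla u(x-y))\,dy.$$
Integrating over $\o^-_\eta$ and applying Tonelli's theorem (all integrands are nonnegative), I would interchange the order of integration and perform the change of variables $x'=x-y$ to obtain
$$F_g(u_\eta,\o^-_\eta)\le\int_{\mathbb R^2}\rho_\eta(y)\left(\int_{\o^-_\eta}g(\nabla u(x-y))\,dx\right)dy=\int_{\mathbb R^2}\rho_\eta(y)\left(\int_{\o^-_\eta-y}g(\nabla u(x'))\,dx'\right)dy.$$

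The key geometric observation is that whenever $y$ lies in the support of $\rho_\eta$, i.e. $|y|\le\eta$, the translated set $\o^-_\eta-y$ is contained in $\o$: indeed if $x\in\o^-_\eta$ then $\dist(x,\partial\o)>\eta$, so $\dist(x-y,\partial\o)\ge\dist(x,\partial\o)-|y|>0$ and $x-y\in\o$. Consequently $\int_{\o^-_\eta-y}g(\nabla u)\,dx'\le\int_\o g(\nabla u)\,dx'=F_g(u,\o)$ for every such $y$, and since $\rho_\eta$ integrates to one I conclude
$$F_g(u_\eta,\o^-_\eta)\le\int_{\mathbb R^2}\rho_\eta(y)\,F_g(u,\o)\,dy=F_g(u,\o),$$
which is the desired estimate. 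If $F_g(u,\o)=+\infty$ the inequality is trivial, so there is no loss in assuming the right-hand side finite, in which case all the manipulations above involve finite quantities.

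I expect the only genuinely delicate points to be bookkeeping rather than conceptual: making sure the convolution identity for $\nabla u_\eta$ is used precisely on $\o^-_\eta$, which is exactly where the ball $\overline{B(x,\eta)}$ stays inside $\o$, and justifying Jensen's inequality for the extended-real-valued convex lower semicontinuous integrand $g$ together with the use of Tonelli for a possibly infinite integral. None of these presents a real obstacle; the convexity of $g$ is the single structural ingredient that drives the monotonicity, and passing to the inner set $\o^-_\eta$ is precisely what absorbs the spreading due to the support of the mollifier.
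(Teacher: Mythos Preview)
Your argument is correct and is precisely the classical Jensen-plus-Tonelli proof for mollifications of convex integrands. The paper does not actually give its own proof of this proposition; it merely cites \cite[Lemma 7.4.4]{CDA}, so there is nothing further to compare. One minor remark: since $u\in W^{1,1}_{\rm loc}(\mathbb R^2)$, the identity $\nabla u_\eta=\rho_\eta*\nabla u$ holds on all of $\mathbb R^2$, not only on $\o^-_\eta$; the restriction to $\o^-_\eta$ is needed only for the inclusion $\o^-_\eta-y\subseteq\o$, which you handle correctly.
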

where $\{u_\eta\}$ is the sequence of standard mollifications as defined in \cite[formula (4.1.2)]{CDA}.

\begin{proof}[Proof of Theorem \ref{thmDAFL-Z}.] The Gamma convergence result has been obtained by double inequality. The lower bound is a consequence of Proposition \ref{prop3.0DAFL}. The proof of the upper bound, where the gradient constraints really play a role, relies on techniques of Convex Analysis analogous to those contained in \cite{DA}. The proof, contained in \cite{DAF}  is sketched in subsection \ref{ub}.
\end{proof}

The following result relies on a by now `standard' technique in dimensional reduction (cf. for instance \cite{LDR}).
   
\begin{Proposition}\label{prop3.0DAFL}
Let $f:{\mathbb R}^3\to[0,+\infty]$ be a Borel function as in Theorem \ref{thmDAFL-Z}, and let $F'$ be 
given by \eqref{1.13'R}. Then
$$
2 \int_\omega f_0^{**}(\nabla_\alpha u)dx_\alpha \leq F'(u,\o)\hbox{ 
for every }(u, \omega) \in W^{1,\infty}(\omega)\times {\cal A}_0({\mathbb R}^2).$$
\end{Proposition}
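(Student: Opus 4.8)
The plan is to establish the inequality by the standard $\Gamma$-liminf scheme for dimension reduction, combining a pointwise minorization that discards the transverse derivative with Jensen's inequality and the lower semicontinuity supplied by Proposition \ref{Proposition 1.4R}. First I would regard $u\in W^{1,\infty}(\omega)$ as the element of $W^{1,\infty}(\Omega)$ that is independent of $x_3$, so that $\nabla_3 u=0$. By Definitions \ref{seqcharacsupinf} and \ref{gammafamily} together with \eqref{1.13'R}, it suffices to fix an arbitrary $\varepsilon_n\to 0$ and an arbitrary $u_n\to u$ in $L^\infty(\Omega)$ and to prove
$$2\int_\omega f_0^{**}(\nabla_\alpha u)\,dx_\alpha\le\liminf_{n\to\infty}F_{f_{\varepsilon_n}}(u_n,\omega),$$
and then to pass to the infimum over all such sequences. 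If the right-hand liminf is $+\infty$ there is nothing to prove, so I may extract a (not relabeled) subsequence along which it is attained and finite.

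The second step is the pointwise minorization that removes the singular scale $1/\varepsilon_n$. For a.e. $x\in\Omega$, the definition of $f_0$ in \eqref{1.18R} gives $f\!\left(\nabla_\alpha u_n(x),\tfrac{1}{\varepsilon_n}\nabla_3 u_n(x)\right)\ge f_0(\nabla_\alpha u_n(x))\ge f_0^{**}(\nabla_\alpha u_n(x))$, the last inequality because $f_0^{**}\le f_0$. Integrating over $\Omega$ yields $F_{f_{\varepsilon_n}}(u_n,\omega)\ge\int_\Omega f_0^{**}(\nabla_\alpha u_n)\,dx$, where the transverse variable now enters only through the domain of integration.

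The third step performs the averaging over the thin direction. I would set $\overline{u}_n(x_\alpha):=\tfrac12\int_{-1}^{1}u_n(x_\alpha,x_3)\,dx_3$, so that $\overline{u}_n\in W^{1,\infty}(\omega)\subset W^{1,1}_{\rm loc}(\mathbb R^2)$ with $\nabla_\alpha\overline{u}_n=\tfrac12\int_{-1}^{1}\nabla_\alpha u_n(\cdot,x_3)\,dx_3$. Since $f_0^{**}$ is convex, a slice-wise application of Jensen's inequality combined with Fubini gives
$$2\int_\omega f_0^{**}(\nabla_\alpha\overline{u}_n)\,dx_\alpha\le\int_\Omega f_0^{**}(\nabla_\alpha u_n)\,dx\le F_{f_{\varepsilon_n}}(u_n,\omega).$$
Because $u$ is independent of $x_3$ one has $|\overline{u}_n-u|\le\|u_n-u\|_{L^\infty(\Omega)}$, hence $\overline{u}_n\to u$ uniformly on $\omega$ and in particular in $L^1_{\rm loc}(\omega)$. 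As $f_0^{**}$ is convex and lower semicontinuous, Proposition \ref{Proposition 1.4R} applied with $g=f_0^{**}$ yields $\int_\omega f_0^{**}(\nabla_\alpha u)\,dx_\alpha\le\liminf_n\int_\omega f_0^{**}(\nabla_\alpha\overline{u}_n)\,dx_\alpha$. Chaining this with the previous display and passing to the infimum over admissible sequences gives the assertion.

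The argument uses only the convexity and lower semicontinuity of $f_0^{**}$, through Jensen and Proposition \ref{Proposition 1.4R}; no coercivity or gradient compactness is needed, as is typical for liminf inequalities. The delicate point, and the place where the reduction is actually carried out, is the averaging step: I must justify that differentiation commutes with integration in $x_3$ for $W^{1,\infty}$ maps, that the slice-wise Jensen inequality combines with Fubini on the nonnegative, Borel-measurable integrand $f_0^{**}(\nabla_\alpha u_n)$, and that $\overline{u}_n$ belongs to the class $W^{1,1}_{\rm loc}(\mathbb R^2)$ required by Proposition \ref{Proposition 1.4R}. These checks are routine, but they are exactly what produces the passage from the three-dimensional integral to the two-dimensional one and the constant $2={\cal L}^1((-1,1))$.
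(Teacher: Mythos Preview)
Your proof is correct. Both your argument and the paper's start from the same pointwise minorization $f\ge f_0\ge f_0^{**}$ to strip out the transverse derivative, but then diverge in how the $x_3$-variable is eliminated.

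The paper works \emph{slice by slice}: for a.e.\ $x_3\in(-1,1)$ the restriction $u_\varepsilon(\cdot,x_3)$ converges to $u$ in $L^1(\omega)$, so Proposition~\ref{Proposition 1.4R} gives $\liminf_\varepsilon\int_\omega f_0^{**}(\nabla_\alpha u_\varepsilon(\cdot,x_3))\,dx_\alpha\ge\int_\omega f_0^{**}(\nabla_\alpha u)\,dx_\alpha$ for a.e.\ $x_3$, and Fatou's lemma in the $x_3$-integral produces the factor $2$. You instead \emph{average first}: Jensen's inequality for the convex $f_0^{**}$ collapses the $x_3$-integral onto the mean $\overline{u}_n$, and then Proposition~\ref{Proposition 1.4R} is applied only once, to the two-dimensional sequence $\overline{u}_n\to u$. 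Your route trades Fatou for Jensen and requires checking that $\nabla_\alpha$ commutes with the $x_3$-average (immediate for $W^{1,\infty}$ maps); the paper's route avoids Jensen but invokes lower semicontinuity uncountably many times and must justify that the slice convergence holds for a.e.\ $x_3$. Both are standard dimension-reduction devices of comparable length; your version is perhaps marginally cleaner in that Proposition~\ref{Proposition 1.4R} is used a single time and no a.e.-in-$x_3$ argument is needed. One cosmetic point: writing $\overline{u}_n\in W^{1,\infty}(\omega)\subset W^{1,1}_{\rm loc}(\mathbb R^2)$ is a slight abuse, since functions on $\omega$ are not globally defined, but Proposition~\ref{Proposition 1.4R} is really a statement about $L^1_{\rm loc}(\omega)$-lower semicontinuity on $\omega$, and the paper's own proof makes the same identification.
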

\begin{proof}[Proof.] Let $(u,\o)\in W^{1,\infty}(\omega) \times {\cal A}_0({\mathbb R}^2)$, and let $\{u_\e\}\subseteq W^{1,\infty}(\Omega)$ be a sequence such that $u_\e\to u$ uniformly  and $\displaystyle{\liminf_{\e\to+\infty}F_\e(u_\e,\o)<+\infty}$. Then
$$\liminf_{\e\to 0}F_\e(u_\e, \o)=\liminf_{\e\to 0}\int_{\omega \times (-1,1)} f\left(\nabla_\alpha
u_\e,{\frac{1}{\e}}\nabla_3 u_\e\right)d x_\alpha d x_3 \ge\liminf_{\e\to 0}\int_{\omega\times (-1,1)} f_0^{**}(\nabla_\alpha
u_\e)dx_\alpha.$$

Now, for a.e. $x_3\in (-1,1)$, it turns out that 
$u_\e(\cdot,x_3)\to u$ in $L^1(\omega)$. Consequently, 
Proposition \ref{Proposition 1.4R}, applied to $\displaystyle{\int_\omega f_0^{**}(\nabla u) dx_\alpha}$  implies
$$\liminf_{\e \to 0}\int_{\omega} f_0^{**}(\nabla_\alpha
u_\e(\cdot,x_3))dx_\alpha \geq\int_\omega f_0^{**}(\nabla 
u)dx_\alpha \hbox{ for }\hbox{a.e. }x_3\in (-1,1).$$
Because of this and of Fatou's lemma, we deduce that
$$
\begin{array}{ll}
\displaystyle{\liminf_{\e\to 0}F_\e(u_\e, \o)\geq\int_{-1}^1\left(\liminf_{\e\to 0}\int_\omega 
f_0^{**}(\nabla_\alpha
u_\e(\cdot,x_3))dx_\alpha\right)d x_3\geq}
\\
\\
\displaystyle{\int_{-1}^1\left(\int_\omega f_0^{**}(\nabla 
u)dx_\alpha\right)dx_3=
2\int_\omega f_0^{**}(\nabla u)dx_\alpha.}
\end{array}
$$

This concludes the proof.
\end{proof}

We stress the fact that the lower bound inequality does not involve any technical assumption on $f$ and is not affected from the unboundedness of $f$. 

\bigskip
\bigskip

\subsection{Estimate from above}\label{ub}

We quote the results necessary to the achievement of the $\Gamma$-limsup inequality. For the sake of brevity we do not present all the proofs, which are contained in \cite{DAF}, but we just give the main steps. A more detailed proof is provided for the last two lemmas contained in this subsection, in order to reach the representation in $W^{1,\infty}(\o)$, since the results contained in \cite{DAF} regard lower semicontinuity in $L^1$ and obtain an integral representation on the space of functions of bounded variation.
We also stress the fact that the `upper bound' is achieved first on linear functions, then on piecewise affine functions, smooth ones and finally on $W^{1,\infty}$. To this end we recall that $PA(\mathbb R^2)$ is the set of piecewise affine functions, namely continuous functions, that can be written as
$\displaystyle{u(x)=\sum_{j=1}^m (u_{z_j}(x)+ s_j)\chi_{P_j}(x) \hbox{ for a.e.}x \in \mathbb R^N}$, with $P_j$ polyhedral sets, namely 
intersections of a finite family of closed half-spaces. Clearly, a polyhedral set is closed and convex.

The proof of following lemma relies on a refined version of the so called Zig-Zag lemma, proven in \cite[Lemmas 3.1 and 3.3]{DA}.

\begin{Lemma}\label{Lemma 3.4R} Let $f:{\mathbb R}^3 \to[0,+\infty]$ be a Borel function as in Theorem \ref{thmDAFL-Z}. Then
$$\inf\left\{\limsup_{\e\to+\infty}\int_{\Omega} f\left(\nabla_\alpha u_\e,\frac{1}{\e}\nabla_3 u_\e\right)dx_\alpha dx_3:\right.$$
$$\{u_\e\}\subseteq W^{1,\infty}(\Omega),\ u_\e\to u_z\hbox{ in }L^{\infty}(\Omega))\Big\}\leq 2{\cal L}^2(\omega){\rm co} f_0(z)$$
$$\hbox{for every }(z, \omega) \in \mathbb R^2 \times \in{\cal A}_0({\mathbb R}^2).$$
\end{Lemma}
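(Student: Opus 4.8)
The plan is to produce, for the fixed pair $(z,\o)$, a single recovering sequence realizing the claimed bound; since the left-hand side is an infimum over admissible sequences, one good choice suffices. Here $u_z$ denotes the function $x\mapsto z\cdot x_\alpha$ on $\Omega$. If ${\rm co}\,f_0(z)=+\infty$ there is nothing to prove, so I would assume ${\rm co}\,f_0(z)<+\infty$, which forces $z\in{\rm co}({\rm dom}\,f_0)$. Using the standard representation of the convex envelope as an infimum of convex combinations of values, together with Carath\'eodory's theorem, I would fix $\delta>0$ and write $z=\sum_{i=1}^{3}\lambda_i z_i$ with $\lambda_i\ge0$, $\sum_i\lambda_i=1$, $z_i\in{\rm dom}\,f_0$, and $\sum_{i=1}^{3}\lambda_i f_0(z_i)\le {\rm co}\,f_0(z)+\delta$. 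For each $i$, by the definition of $f_0$ in \eqref{1.18R}, I would choose $\zeta_i\in\mathbb R$ with $f(z_i,\zeta_i)\le f_0(z_i)+\delta$, so that $(z_i,\zeta_i)\in{\rm dom}\,f$ while $f(z_i,\zeta_i)$ stays close to $f_0(z_i)$.

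Next I would build the sequence by a lamination in $x_\alpha$ coupled with an $\e$-scaled affine correction in $x_3$. First I would invoke the (refined) Zig-Zag lemma of \cite[Lemmas 3.1 and 3.3]{DA} to obtain piecewise affine $v_k:\o\to\mathbb R$ with $\nabla v_k\in\{z_1,z_2,z_3\}$ a.e., $v_k\to u_z$ uniformly on $\o$, and $\mathcal{L}^2(\{x_\alpha\in\o:\nabla v_k(x_\alpha)=z_i\})\to\lambda_i\mathcal{L}^2(\o)$; recall that in the scalar setting any pair of gradients is rank-one compatible, so such laminates exist for every convex combination. On the region where $\nabla v_k=z_i$ I would set $u_{\e}=v_k+\e\,\zeta(x_\alpha)\,x_3$ with $\zeta(x_\alpha)=\zeta_i$ there, so that $\tfrac1\e\nabla_3 u_\e=\zeta_i$ and $\nabla_\alpha u_\e=z_i$ in the bulk, while $u_\e\to u_z$ uniformly because $\e\,\zeta(x_\alpha)x_3\to0$ uniformly. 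On the bulk the integrand equals $f(z_i,\zeta_i)$, so after coupling $k=k(\e)$ by a diagonal argument the integral tends to $\sum_i\lambda_i\,\mathcal{L}^3(\Omega)\,f(z_i,\zeta_i)=2\mathcal{L}^2(\o)\sum_i\lambda_i f(z_i,\zeta_i)\le 2\mathcal{L}^2(\o)\big({\rm co}\,f_0(z)+2\delta\big)$, and letting $\delta\to0$ finishes the estimate.

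The hard part is the interface analysis, and this is exactly the gradient-constrained core of the lemma. Where $\zeta(x_\alpha)$ jumps between $\zeta_i$ and $\zeta_j$ the naive formula is discontinuous, so $\zeta$ must be smoothed over transition layers of width $\rho$; there $\nabla_\alpha u_\e$ and $\tfrac1\e\nabla_3 u_\e$ take intermediate values, and two things must be controlled at once. The total measure of the layers must vanish, and choosing $\rho=\rho(\e)$ with $\e/\rho(\e)\to0$ keeps the parasitic term $\e\,\nabla_\alpha\zeta\cdot x_3=O(\e/\rho)$ negligible, so that $u_\e$ stays Lipschitz and $\nabla_\alpha u_\e$ stays near the $z_i$. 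More seriously, the interpolated gradients must remain in ${\rm dom}\,f$, since $f$ may take the value $+\infty$ and a single bad layer would render the integral infinite; the intermediate values lie a priori only in ${\rm co}({\rm dom}\,f)$, compare \eqref{1.31R}, and not necessarily in ${\rm dom}\,f$. Guaranteeing that the transitions stay inside ${\rm dom}\,f$ while keeping their $f$-contribution negligible is precisely what the refined Zig-Zag lemma delivers, and I expect it to be the only genuinely delicate step. Finally, I would use the relative-interior properties recorded in \eqref{1.1R} and Proposition \ref{Proposition 1.3R} to reduce, if needed, to the case where the $z_i$ lie in ${\rm ri}({\rm dom}\,f_0)$, so that the $\zeta_i$ and the interface paths can be chosen within ${\rm dom}\,f$.
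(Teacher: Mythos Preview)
Your overall plan---reduce to a finite convex combination $z=\sum_i\lambda_i z_i$ with $(z_i,\zeta_i)\in{\rm dom}\,f$ and $\sum_i\lambda_i f(z_i,\zeta_i)$ close to ${\rm co}\,f_0(z)$, build a piecewise-affine recovery sequence, diagonalize, and let $\delta\to0$---is exactly what the paper does; the paper gives no further detail beyond citing the Zig-Zag construction of \cite[Lemmas~3.1 and~3.3]{DA} and its adaptation in \cite{DAF}.

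The gap is in your specific construction. You laminate $v_k$ purely in $x_\alpha$ and then add $\e\,\zeta(x_\alpha)x_3$ with a smoothed $\zeta$. On the transition strips the rescaled gradient is $(z_i,\zeta(x_\alpha))$ with $\zeta(x_\alpha)$ ranging over $[\zeta_i,\zeta_j]$, and since $f$ is merely Borel there is no reason for the segment $\{z_i\}\times[\zeta_i,\zeta_j]$ to meet ${\rm dom}\,f$; the integral is then $+\infty$ regardless of how thin the strips are. Your invocation of the Zig-Zag lemma does not repair this, because as you apply it the lemma only manufactures the two-dimensional laminate $v_k$ and is silent about the third-variable transition, and the relative-interior reduction you propose at the end does not help either, for the same reason. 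The way the refined Zig-Zag construction actually avoids the obstruction is to laminate directly in $\mathbb R^3$ with the $\e$-dependent gradients $(z_i,\e\zeta_i)$: for two indices one takes oblique layers with normal proportional to $\big(z_1-z_2,\e(\zeta_1-\zeta_2)\big)$, obtaining a piecewise-affine $u_\e\in W^{1,\infty}(\Omega)$ with $(\nabla_\alpha u_\e,\tfrac1\e\nabla_3 u_\e)\in\{(z_1,\zeta_1),(z_2,\zeta_2)\}$ a.e.\ \emph{exactly}, hence no transition region and no constraint violation; nested lamination handles $m\ge3$ gradients. With that modification your diagonal argument and the limit $\delta\to0$ go through unchanged.
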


The lower semicontinuity of $F''$ with respect to the uniform convergence, the properties of $f_0^{\ast \ast}$, \eqref{1.1R} and Proposition \ref{Proposition 1.3R}, the application of Lemma \ref{Lemma 3.4R} to $tz+ (1-t)z_0$ with $z_0 \in {\rm ri}( {\rm dom}f_0^{\ast \ast})$,with  $t \in (0,1)$, allow to prove the following result.
\begin{Proposition}\label{Proposition 3.5R} Let 
$f:\mathbb R^3 \to[0,+\infty]$ be a Borel function as in Theorem \ref{thmDAFL-Z}, and let $F''$ 
be given by \eqref{1.13''R}. Then
$$F''(u_z,\o)\leq 2{\cal L}^2(\omega)f_0^{**}(z)\hbox{ for 
every }(z, \omega)\in \mathbb R^2 \times {\cal A}_0({\mathbb R}^2).$$
\end{Proposition}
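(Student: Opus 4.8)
The plan is to extend the linear-function estimate in Lemma \ref{Lemma 3.4R} from $\co f_0$ to its lower semicontinuous convex envelope $f_0^{**}$, using the lower semicontinuity of $F''$ together with Proposition \ref{Proposition 1.3R} to handle the boundary of the domain. First I would recall the two cases of Proposition \ref{Proposition 1.3R}: for $z \in {\rm ri}({\rm co}({\rm dom} f_0)) \cup (\mathbb R^2 \setminus \overline{{\rm co}({\rm dom} f_0)})$ we have $f_0^{**}(z) = {\rm co} f_0(z)$, so the claimed inequality $F''(u_z,\o) \le 2{\cal L}^2(\omega) f_0^{**}(z)$ follows immediately from Lemma \ref{Lemma 3.4R} (noting ${\rm co} f_0 \le f_0$ pointwise and hence ${\rm co} f_0 = f_0^{**}$ on this set). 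The remaining case is $z$ on the relative boundary of ${\rm co}({\rm dom} f_0)$, which is where the continuity of $f_0^{**}$ fails and a limiting argument is genuinely needed.

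For such a boundary point, I would fix a reference point $z_0 \in {\rm ri}({\rm dom} f_0^{**}) = {\rm ri}({\rm co}({\rm dom} f_0))$ (equality by Proposition \ref{Proposition 1.3R}), which is nonempty since ${\rm dom} f_0^{**}$ is a nonempty convex set. By \eqref{1.1R}, the segment point $z_t := t z + (1-t) z_0$ lies in ${\rm ri}({\rm co}({\rm dom} f_0))$ for every $t \in [0,1[$. Applying Lemma \ref{Lemma 3.4R} at $z_t$ gives
\begin{equation}\nonumber
F''(u_{z_t}, \o) \le 2{\cal L}^2(\omega)\, {\rm co} f_0(z_t) = 2{\cal L}^2(\omega)\, f_0^{**}(z_t),
\end{equation}
where the equality again uses Proposition \ref{Proposition 1.3R} on the relative interior. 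Now let $t \to 1^-$. On the left, $u_{z_t} \to u_z$ uniformly on the bounded set $\overline\Omega$ (linear functions depend continuously on their gradient), so the lower semicontinuity of $F''(\cdot,\o)$ with respect to uniform convergence yields $F''(u_z,\o) \le \liminf_{t \to 1^-} F''(u_{z_t},\o)$. On the right, the convexity of $f_0^{**}$ along the segment together with its lower semicontinuity gives $\limsup_{t\to 1^-} f_0^{**}(z_t) \le f_0^{**}(z)$; in fact, since $z_0 \in {\rm ri}({\rm dom} f_0^{**})$ and $z \in \overline{{\rm dom} f_0^{**}}$, the restriction of the convex function $f_0^{**}$ to the segment $[z_0,z]$ is continuous up to the endpoint $z$, so $f_0^{**}(z_t) \to f_0^{**}(z)$. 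Combining the two halves produces the desired inequality at $z$.

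The main obstacle I anticipate is controlling the right-hand side as $t \to 1^-$, namely justifying $\limsup_{t\to 1^-} f_0^{**}(z_t) \le f_0^{**}(z)$ at a relative boundary point where $f_0^{**}$ need not be continuous. The resolution is exactly the one-dimensional continuity of convex functions along a ray from an interior point: for a proper convex function, the map $t \mapsto f_0^{**}(t z + (1-t) z_0)$ is convex on $[0,1]$ and, because $z_0$ is in the relative interior of the effective domain, this single-variable convex function is upper semicontinuous (indeed continuous) at the endpoint $t=1$ by the standard one-dimensional theory, which bypasses the multidimensional discontinuity. One should also take care that the value $f_0^{**}(z)$ may equal $+\infty$, in which case the asserted inequality is trivial; thus the argument above is only needed when $f_0^{**}(z) < +\infty$, i.e. when $z \in {\rm dom} f_0^{**} \subseteq \overline{{\rm co}({\rm dom} f_0)}$, which is consistent with the segment construction. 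Assembling the relative-interior case, the exterior case, and this boundary limiting argument covers all $z \in \mathbb R^2$ and completes the proof.
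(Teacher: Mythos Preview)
Your proposal is correct and follows essentially the same approach as the paper: the paper's sketch explicitly invokes the lower semicontinuity of $F''$, the identity in Proposition~\ref{Proposition 1.3R}, the segment property \eqref{1.1R}, and the application of Lemma~\ref{Lemma 3.4R} to $tz+(1-t)z_0$ with $z_0\in{\rm ri}({\rm dom}f_0^{**})$ and $t\in(0,1)$, which is exactly your boundary argument. Your write-up simply fleshes out the details, including the one-dimensional convexity bound $f_0^{**}(z_t)\le t f_0^{**}(z)+(1-t)f_0^{**}(z_0)$ that controls the right-hand side as $t\to 1^-$.
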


The extension of Proposition \ref{Proposition 3.5R} to piecewise 
affine functions relies on a preparatory result analogous to \cite[Lemma 3.6]{CCEDA}, whose proof we omit for the sake of exposition. The statement below is an adaptation of \cite[Lemma 2.1]{DA}
\begin{Lemma}\label{comeLemma3.6}
Let $u=\sum_{j=1}^m 
(u_{z_j}+s_j)\chi_{P_j}$ be in $PA(\mathbb R^2)$, and 
let $\omega$ be a convex open subset of $\mathbb R^2$.
Then there exist $k\in\mathbb N$ and $N_1$, \dots, 
$N_k\subseteq\{j\in\{1,\dots,m\} : 
{\rm int}(P_j)\cap\omega\not=\emptyset\}$ such that
\begin{equation}\label{bla}u(x)=\max_{i\in\{1,\dots,k\}}\min_{j\in 
N_i}(u_{z_j}(x)+s_j)\hbox{ for every }x\in\omega.
\end{equation}
\end{Lemma}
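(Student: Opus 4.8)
The plan is to represent any piecewise affine $u\in PA(\mathbb R^2)$ on a convex open set $\omega$ as a finite max-min of affine functions, using the index sets $N_1,\dots,N_k$ whose existence is asserted. Concretely, I would argue that on $\omega$ the function $u$ agrees with $\max_{i}\min_{j\in N_i}(u_{z_j}+s_j)$. The key combinatorial-geometric fact to establish is that, since $\omega$ is convex and $u$ is continuous and piecewise affine with the polyhedral pieces $P_j$, the local structure of $u$ near any point is that of a max-min of the affine pieces active there; this is precisely the content of the classical representation of a continuous piecewise-affine function (a ``virtual polytope''-type statement) adapted to the convex window $\omega$. The reference to \cite[Lemma 2.1]{DA} and \cite[Lemma 3.6]{CCEDA} signals that the argument is an adaptation of a known result.

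First I would fix a point $x_0\in\omega$ and consider the affine functions $\ell_j:=u_{z_j}+s_j$ for those $j$ with ${\rm int}(P_j)\cap\omega\neq\emptyset$; these are the only pieces that matter, since pieces meeting $\omega$ only in a lower-dimensional set contribute nothing to the values of $u$ on the open set $\omega$. Next I would use continuity of $u$ together with the convexity of each $P_j$ to show that for $x$ in a neighborhood of $x_0$ the value $u(x)$ equals the value of whichever $\ell_j$ is active on the cell containing $x$. The heart of the matter is a \emph{global} patching: one must choose the index families $N_i$ so that the two-level formula \eqref{bla} holds simultaneously at every $x\in\omega$, not merely locally. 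Here convexity of $\omega$ is essential, because it guarantees that the arrangement of the hyperplanes $\{\ell_j=\ell_{j'}\}$ restricted to $\omega$ decomposes $\omega$ into convex cells on each of which a single affine piece is realized, and that the ordering relations among the $\ell_j$ needed for the max-min description are consistent across cells sharing a facet.

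I would then assemble the families $N_i$ as follows: for each maximal cell, the minimum over the appropriate $N_i$ selects the active affine piece, while taking the maximum over $i$ recovers $u$ on all of $\omega$. The verification splits into showing $u(x)\le\max_i\min_{j\in N_i}\ell_j(x)$ and the reverse inequality; each is checked cell by cell using the fact that on the cell containing $x$ the active piece lies below (resp.\ above) the others in the pattern dictated by continuity of $u$ across shared facets. Finiteness of $k$ and of each $N_i$ follows from finiteness of $m$.

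The main obstacle I anticipate is precisely this global consistency step: producing index sets $N_1,\dots,N_k$ that work \emph{uniformly} over the whole convex set $\omega$, rather than only piecewise. Locally the max-min representation is routine, but gluing the local descriptions into a single formula valid on all of $\omega$ requires careful bookkeeping of the facet-crossing relations and genuinely uses the convexity of $\omega$ (to rule out nonconvex configurations in which a naive max-min would fail). I expect this to be handled by an induction on the number of active pieces or on the dimension of the cells, mirroring the Zig-Zag-type arguments in \cite{DA}, and I would invoke \cite[Lemma 2.1]{DA} and \cite[Lemma 3.6]{CCEDA} to supply the combinatorial core, omitting the full bookkeeping as the authors do.
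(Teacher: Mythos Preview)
Your proposal is consistent with the paper's treatment: the paper omits the proof entirely and simply records that the result is an adaptation of \cite[Lemma 2.1]{DA}, analogous to \cite[Lemma 3.6]{CCEDA}, which are exactly the references you ultimately defer to after your heuristic outline. One minor inaccuracy worth flagging: the Zig-Zag lemmas of \cite{DA} are invoked in the paper for Lemma~\ref{Lemma 3.4R} (building oscillating recovery sequences for the upper bound on affine functions), not for the max-min representation here, which in \cite{DA} is obtained by a direct combinatorial argument rather than an induction of Zig-Zag type.
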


We also underline, that when representing the $\Gamma$-limit on piecewise affine functions, because of the use of representation \eqref{bla}, the domain $\omega$ needs to be assumed convex. In fact, we recall that in \cite{CCEDA} several counterexamples are provided, which show that \eqref{bla} cannot be achieved if the convexity of $\omega$ is dropped.
In fact the result below can be achieved in two steps, first one provides an abstract formula for the integral representation of the Gamma-limsup on  functions which are minimun and/or maximum of others, then Lemma
\ref{comeLemma3.6} and Proposition \ref{Proposition 3.5R} are applied.

\begin{Proposition}\label{Proposition 3.7R}
Let  $f:\mathbb R^3\to[0,+\infty]$ be a Borel function as in Theorem \ref{thmDAFL-Z}, and let $F''$ 
be given by \eqref{1.13''R}. Then
\begin{equation}
\nonumber
F''(u,\o)\leq\int_\omega f_0^{**}(\nabla_\alpha u)dx_\alpha \hbox{ 
for every }\ u\in 
PA(\mathbb R^2), \o\in\A_0(\mathbb R^3)\hbox{ convex}.\end{equation}

\end{Proposition}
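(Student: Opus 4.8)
\textbf{Plan of proof for Proposition \ref{Proposition 3.7R}.}
The plan is to upgrade Proposition \ref{Proposition 3.5R}, which handles linear functions $u_z$, to arbitrary $u \in PA(\mathbb R^2)$ by combining the representation \eqref{bla} from Lemma \ref{comeLemma3.6} with abstract stability properties of the $\Gamma$-limsup functional $F''$ under the $\max$ and $\min$ operations. The key structural input is that, by Lemma \ref{comeLemma3.6}, on a convex set $\omega$ every piecewise affine $u$ can be written as $u(x) = \max_{i}\min_{j \in N_i}(u_{z_j}(x)+s_j)$ on $\omega$, so it suffices to control the behaviour of $F''$ on functions that are finite lattice combinations of the affine pieces $u_{z_j}+s_j$, for which Proposition \ref{Proposition 3.5R} already gives $F''(u_{z_j},\omega') \le 2\,{\cal L}^2(\omega')\,f_0^{**}(z_j)$ on every $\omega' \in {\cal A}_0(\mathbb R^2)$.

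First I would establish an abstract sub-lattice estimate for $F''(\cdot,\omega)$: if $v$ and $w$ are two functions for which one already controls $F''$ locally by integrals of $f_0^{**}$ of their gradients, then the same bound holds for $\max\{v,w\}$ and $\min\{v,w\}$. The natural way to get this is a De Giorgi--type slicing/joining construction: on the open region $\{v > w\}$ the recovering sequences for $v$ are used, on $\{v < w\}$ those for $w$, and one interpolates across a thin neighbourhood of the interface $\{v = w\}$ whose measure (and hence whose energy contribution) can be made negligible. Since for piecewise affine data these interfaces are contained in a finite union of hyperplanes, the interface has zero ${\cal L}^2$-measure and the gluing error is controlled; here the use of the inner regular envelope (see \eqref{innreg}) and the lower semicontinuity of $F''$ with respect to uniform convergence are what allow the localized estimates to be patched into a global one on $\omega$. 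Iterating over the finitely many indices $i$ and $j$ in \eqref{bla}, I would then obtain
$$
F''(u,\omega) \le \sum_{\text{relevant } j} \int_{\omega \cap \{u = u_{z_j}+s_j\}} f_0^{**}(z_j)\,dx_\alpha = \int_\omega f_0^{**}(\nabla_\alpha u)\,dx_\alpha,
$$
where the last equality uses that $\nabla_\alpha u = z_j$ a.e.\ on the polyhedral cell where $u$ coincides with the $j$-th affine piece.

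The main obstacle I expect is the gluing step across the interfaces: because $f$ (and thus $f_0^{**}$) may be unbounded, with the gradient constraint \eqref{remunb} in force, the interpolating functions built near $\{v=w\}$ must be constructed so that their rescaled gradients stay inside ${\rm dom}\,f$ almost everywhere, otherwise the energy on the transition layer blows up rather than being small. Controlling this requires a careful, constraint-respecting construction of the transition — presumably by exploiting convexity of ${\rm dom}\,f_0^{**}$ together with relation \eqref{1.31R} and the relative-interior property \eqref{1.1R}, so that intermediate gradient values can be taken in ${\rm ri}({\rm dom}\,f_0^{**})$ where $f_0^{**}$ is finite and continuous. The convexity of $\omega$ is essential precisely because it guarantees the global lattice representation \eqref{bla}; as noted after Lemma \ref{comeLemma3.6}, dropping it makes \eqref{bla} fail and the argument collapses. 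Once the lattice estimate and its constrained gluing are in place, the passage from Proposition \ref{Proposition 3.5R} to the stated integral bound on $PA(\mathbb R^2)$ is a finite combinatorial application of these two ingredients.
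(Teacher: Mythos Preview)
Your proposal is correct and follows essentially the same route the paper indicates: an abstract stability estimate for $F''$ under $\max/\min$ operations, combined with Lemma \ref{comeLemma3.6} and Proposition \ref{Proposition 3.5R}. The paper only sketches this (``first one provides an abstract formula for the integral representation of the Gamma-limsup on functions which are minimum and/or maximum of others, then Lemma \ref{comeLemma3.6} and Proposition \ref{Proposition 3.5R} are applied''), and your elaboration of the De Giorgi--type gluing across the affine interfaces, together with the need to keep the transition gradients inside ${\rm dom}\,f$ via \eqref{1.1R} and \eqref{1.31R}, is exactly the technical content one expects to fill in.
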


Next it is stated the upper bound for the inner regular envelope of $F''$ (cf. \eqref{innreg}) on the class of $C^1(\mathbb R^2)$ functions. The proof can be obtained exploiting the strong approximation of $C^1$ functions by piecewise affine ones and the convexity of $f_0^{\ast \ast}$ and adapting to the dimensional reduction case arguments in the same spirit of those contained in \cite[Lemma 3.8]{DA}. We also would like to underline that the convexity of $\omega$ yields that,
\begin{equation}\nonumber
\sup\left\{F''(u,A) : A\in\A_0(\mathbb R^2),\ A\hbox{ 
convex},\ {\overline 
A}\subseteq \o\right\}=F_-''(u,\omega),
\end{equation}
 and this plays a crucial role to obtain \eqref{C1rep}.

\begin{Lemma}\label{Lemma 3.8R} Let $f:\mathbb R^3\to[0,+\infty]$ 
be a Borel function as in Theorem \ref{thmDAFL-Z}, and let $F''$ be given by \eqref{1.13''R}. Then
\begin{equation}\label{C1rep}
F''_-(u, \o)\leq\int_\o f_0^{**}(\nabla_\alpha 
u)dx_\alpha \hbox{ for every }\ u\in 
C^1(\mathbb R^2), \o\in\A_0(\mathbb R^2)\hbox{ 
convex}.
\end{equation}
\end{Lemma}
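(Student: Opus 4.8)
The plan is to upgrade the piecewise affine bound of Proposition \ref{Proposition 3.7R} to $C^1$ data by a density argument, exploiting the remark recorded just before the statement: since $\o$ is convex, $F''_-(u,\o)$ equals the supremum of $F''(u,A)$ over convex $A\in\A_0(\mathbb R^2)$ with $\overline A\subseteq\o$. First I would fix $u\in C^1(\mathbb R^2)$ and convex $\o\in\A_0(\mathbb R^2)$, and assume $\int_\o f_0^{**}(\nabla_\alpha u)\,dx_\alpha<+\infty$, the opposite case being trivial. Then $f_0^{**}(\nabla_\alpha u(x_\alpha))<+\infty$ for a.e. $x_\alpha$, and since $\nabla_\alpha u$ is continuous it takes values in $\overline{{\rm dom}\,f_0^{**}}$ throughout $\o$.

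The central difficulty is that $f_0^{**}$ is only lower semicontinuous and may equal $+\infty$, so uniform convergence of gradients does not by itself control the energy near the boundary of ${\rm dom}\,f_0^{**}$. To bypass this I would fix $z_0\in{\rm ri}({\rm dom}\,f_0^{**})$ and, for $s\in(0,1)$, set $w_s:=(1-s)u+s\,u_{z_0}$, so that $\nabla_\alpha w_s=(1-s)\nabla_\alpha u+s z_0$. By \eqref{1.1R}, $\nabla_\alpha w_s(x_\alpha)\in{\rm ri}({\rm dom}\,f_0^{**})$ for every $x_\alpha\in\o$; moreover, for any $A$ with $\overline A\subseteq\o$ the compact image $\nabla_\alpha w_s(\overline A)$, being a contraction of a compact subset of $\overline{{\rm dom}\,f_0^{**}}$ toward the relative interior point $z_0$, lies in a compact subset of ${\rm ri}({\rm dom}\,f_0^{**})$, where $f_0^{**}$ is continuous and bounded.

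Next, for each convex $A\in\A_0(\mathbb R^2)$ with $\overline A\subseteq\o$, I would approximate $w_s$ by functions $u_n\in PA(\mathbb R^2)$ with $u_n\to w_s$ and $\nabla u_n\to\nabla w_s$ uniformly on $\overline A$ (the standard piecewise affine interpolation of a $C^1$ function on a fine triangulation). For $n$ large the gradients $\nabla_\alpha u_n$ remain in a fixed compact subset of ${\rm ri}({\rm dom}\,f_0^{**})$, so Proposition \ref{Proposition 3.7R} applies and gives $F''(u_n,A)\leq\int_A f_0^{**}(\nabla_\alpha u_n)\,dx_\alpha$. Passing to the limit in $n$, using on the left the lower semicontinuity of $F''(\cdot,A)$ for the uniform convergence and on the right the continuity of $f_0^{**}$ on that compact set together with dominated convergence, yields $F''(w_s,A)\leq\int_A f_0^{**}(\nabla_\alpha w_s)\,dx_\alpha\leq\int_\o f_0^{**}(\nabla_\alpha w_s)\,dx_\alpha$. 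Taking the supremum over all such convex $A$ and invoking the inner-regularity identity gives $F''_-(w_s,\o)\leq\int_\o f_0^{**}(\nabla_\alpha w_s)\,dx_\alpha$.

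Finally I would let $s\to 0^+$. Each $F''(\cdot,A)$ is lower semicontinuous for the uniform convergence, hence so is the supremum $F''_-(\cdot,\o)$, and since $w_s\to u$ uniformly on the bounded set $\o$ we get $F''_-(u,\o)\leq\liminf_{s\to 0}F''_-(w_s,\o)$. On the right, convexity of $f_0^{**}$ gives $f_0^{**}(\nabla_\alpha w_s)\leq(1-s)f_0^{**}(\nabla_\alpha u)+s\,f_0^{**}(z_0)$, so $\limsup_{s\to 0}\int_\o f_0^{**}(\nabla_\alpha w_s)\,dx_\alpha\leq\int_\o f_0^{**}(\nabla_\alpha u)\,dx_\alpha$, because $f_0^{**}(z_0)<+\infty$ and $\o$ is bounded. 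Chaining the two estimates yields \eqref{C1rep}. The main obstacle is precisely the failure of continuity of $f_0^{**}$ on the boundary of its effective domain; the contraction $w_s$ toward an interior point, legitimized by \eqref{1.1R}, is what makes the limit in $n$ licit, while the convexity inequality is what finally lets one remove the contraction.
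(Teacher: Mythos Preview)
Your argument follows the strategy the paper sketches: pass from piecewise affine to $C^1$ by strong approximation on convex $A$ with $\overline A\subseteq\omega$, invoke Proposition~\ref{Proposition 3.7R}, exploit the convexity of $f_0^{**}$, and recover $F''_-$ via the identity $F''_-(u,\omega)=\sup\{F''(u,A):A\hbox{ convex},\ \overline A\subseteq\omega\}$. The contraction $w_s=(1-s)u+s\,u_{z_0}$ toward a point of ${\rm ri}({\rm dom}\,f_0^{**})$ is precisely the device that makes the right-hand side converge, and the paper's reference to ``the convexity of $f_0^{**}$'' and to \cite{DA} points to the same mechanism.

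There is, however, one technical gap. The claim that for $n$ large the gradients $\nabla_\alpha u_n$ remain in a fixed compact subset of ${\rm ri}({\rm dom}\,f_0^{**})$ tacitly uses that ${\rm ri}({\rm dom}\,f_0^{**})$ is open in $\mathbb R^2$, i.e.\ that ${\rm int}({\rm dom}\,f_0^{**})\neq\emptyset$. When ${\it aff}({\rm dom}\,f_0^{**})$ is a proper affine subspace, the piecewise affine interpolants of $w_s$ on a generic triangulation have gradients that leave ${\it aff}({\rm dom}\,f_0^{**})$ and hence land where $f_0^{**}=+\infty$, even though they converge uniformly to $\nabla_\alpha w_s$; the continuity of $f_0^{**}$ on ${\rm ri}({\rm dom}\,f_0^{**})$ is only relative to the affine hull. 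The remedy is routine: if ${\rm dom}\,f_0^{**}$ is a single point then $u$ is affine and Proposition~\ref{Proposition 3.5R} applies directly; if ${\it aff}({\rm dom}\,f_0^{**})$ is a line then, after a rotation, $\nabla_\alpha w_s$ has constant second component, so $w_s(x_1,x_2)=g(x_1)+cx_2$ with $g\in C^1(\mathbb R)$, and a one-dimensional piecewise affine approximation of $g$ yields $u_n\in PA(\mathbb R^2)$ whose gradients stay on that line. With this adjustment your proof is complete and coincides with the route the paper indicates.
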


We present the completion of the proof of the upper bound inequality. 
We argue as in \cite[Lemma 3.9]{CCEDA}.

\begin{Lemma}\label{Lemma 3.9CCEDA}
 Let $f:\mathbb R^3\to[0,+\infty]$ 
be a Borel function as in Theorem \ref{thmDAFL-Z}, and let $F''$ be given by \eqref{1.13''R}. Then
\begin{equation}\label{3.53CCEDA}
F''_-(u,\o)\leq\int_\o f_0^{**}(\nabla_\alpha
u)dx_\alpha \hbox{ for every }\ u\in 
W^{1,\infty}_{\rm loc}(\mathbb R^2), \o\in\A_0(\mathbb R^2)\hbox{ 
convex}.
\end{equation}
\end{Lemma}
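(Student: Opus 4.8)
\textbf{Proof plan for Lemma \ref{Lemma 3.9CCEDA}.}

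The plan is to pass from the $C^1$ representation of Lemma \ref{Lemma 3.8R} to the $W^{1,\infty}_{\rm loc}$ case by a density-plus-inner-regularity argument, exploiting the lower semicontinuity of $F''$ with respect to uniform convergence together with the convexity (hence continuity on the interior of its domain) of $f_0^{**}$. First I would fix $\o \in \A_0(\mathbb R^2)$ convex and $u \in W^{1,\infty}_{\rm loc}(\mathbb R^2)$; since $F''_-(u,\o)$ is defined through the inner regular envelope \eqref{innreg}, it suffices to bound $F''(u,A)$ for an arbitrary convex $A \in \A_0(\mathbb R^2)$ with $\overline A \subseteq \o$ and then take the supremum over such $A$ to recover $F''_-(u,\o)$. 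On such a relatively compact convex $A$ the function $u$ is genuinely Lipschitz, so $\nabla_\alpha u \in L^\infty(A)$ takes values in a bounded (hence, after taking convex hull, in a compact convex) subset of $\mathbb R^2$.

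Next I would mollify: set $u_\eta$ to be the standard mollifications (as in Proposition \ref{Proposition 1.6R}), which are $C^1$ and converge to $u$ uniformly on $\overline A$, with $\nabla_\alpha u_\eta \to \nabla_\alpha u$ in, say, $L^1(A)$ and with gradients staying in the same convex compact set. Applying Lemma \ref{Lemma 3.8R} to each $u_\eta$ on a slightly smaller convex set $A' \ss A$ gives $F''_-(u_\eta, A') \leq \int_{A'} f_0^{**}(\nabla_\alpha u_\eta)\,dx_\alpha$. The lower semicontinuity of $F''$ under uniform convergence yields $F''(u, A') \leq \liminf_{\eta \to 0} F''(u_\eta, A')$, and one must also relate $F''(u_\eta,A')$ to $F''_-(u_\eta,A')$; here the convexity of $A'$ and the inner-regularity remark preceding Lemma \ref{Lemma 3.8R}, namely that $\sup\{F''(u,A): A \text{ convex}, \overline A \subseteq \o\} = F''_-(u,\o)$, is exactly what makes the two comparable. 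On the right-hand side I would pass to the limit using that $f_0^{**}$ is convex and continuous on the interior of its domain: the gradients $\nabla_\alpha u_\eta$ remain in a fixed compact convex subset where $f_0^{**}$ is continuous and bounded, so dominated convergence (or continuity of the convex integral functional $F_{f_0^{**}}$ from Proposition \ref{Proposition 1.4R}) gives $\int_{A'} f_0^{**}(\nabla_\alpha u_\eta) \to \int_{A'} f_0^{**}(\nabla_\alpha u)$. Combining, $F''(u,A') \leq \int_{A'} f_0^{**}(\nabla_\alpha u)\,dx_\alpha \leq \int_\o f_0^{**}(\nabla_\alpha u)\,dx_\alpha$, and taking the supremum over $A' \ss \o$ delivers \eqref{3.53CCEDA}.

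The main obstacle I anticipate is controlling the integrand near the boundary of the domain of $f_0^{**}$: the gradient constraint means $f_0^{**}$ may be $+\infty$ outside a convex set, so the continuity used in the limit passage only holds on the \emph{interior} of ${\rm dom}\,f_0^{**}$, and one must ensure the Lipschitz function $u$ has gradients landing (up to the relevant convex hull) inside this interior, invoking \eqref{1.1R} and Proposition \ref{Proposition 1.3R} to handle boundary values by the standard convex-combination trick $t z + (1-t)z_0$ with $z_0 \in {\rm ri}({\rm dom}\,f_0^{**})$. A secondary technical point is the interplay between $F''$ and its inner regular envelope $F''_-$ when transferring the estimate from $C^1$ to the mollified-then-Lipschitz setting; this is precisely where the convexity of $\o$ (and of the approximating sets $A$, $A'$) is essential, as the paper stresses, since without it the representation \eqref{bla} underlying Lemma \ref{Lemma 3.8R} would fail.
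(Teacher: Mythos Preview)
Your overall strategy---mollify, apply Lemma \ref{Lemma 3.8R} on compactly contained convex sets, then pass to the limit and exhaust $\o$---is exactly the paper's. The difference lies in how the right-hand side is controlled, and here the paper takes a shorter route that bypasses the obstacle you anticipate.

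You propose to pass to the limit in $\int_{A'} f_0^{**}(\nabla_\alpha u_\eta)\,dx_\alpha$ via dominated convergence, relying on continuity of $f_0^{**}$ on ${\rm int}({\rm dom}\,f_0^{**})$, and you correctly flag that gradients may hit $\partial({\rm dom}\,f_0^{**})$, requiring the $tz+(1-t)z_0$ trick. The paper avoids all of this by invoking Proposition \ref{Proposition 1.6R} (the Jensen-type inequality for mollifications of convex integrands), which gives directly
\[
\int_{\o^-_\eta} f_0^{**}(\nabla_\alpha u_\eta)\,dx_\alpha \;\leq\; \int_\o f_0^{**}(\nabla_\alpha u)\,dx_\alpha
\]
for every $\eta>0$, with no limit passage on the right and hence no boundary-of-domain issue. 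This is the key simplification you miss: once you have this, the chain $(F'')_-(u_\eta,A)\leq \int_A f_0^{**}(\nabla_\alpha u_\eta)\leq \int_{\o^-_\eta} f_0^{**}(\nabla_\alpha u_\eta)\leq \int_\o f_0^{**}(\nabla_\alpha u)$ is immediate for $A\ss\o^-_\eta$.

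A second, smaller point: on the left-hand side you work with $F''$ and then worry about relating $F''(u_\eta,A')$ to $F''_-(u_\eta,A')$. The paper instead works with $(F'')_-$ throughout, using that $(F'')_-(\cdot,A)$ is itself $L^\infty$-lower semicontinuous (cf.\ \cite[Remark 15.13]{D}); this gives $(F'')_-(u,A)\leq\liminf_\eta (F'')_-(u_\eta,A)$ directly and removes the need for the extra comparison. Your approach can be made to work, but the paper's two devices---Proposition \ref{Proposition 1.6R} on the right and lower semicontinuity of $(F'')_-$ on the left---make the argument essentially two lines.
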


\begin{proof}[Proof]
Let $\omega$ and $u$ as in \eqref{3.53CCEDA}.
Denoting by $u_\eta(x_\alpha)$ the regularization of $u$ at $x_\alpha$ defined in \cite[formula (4.2.1)]{CDA}, and recalling that 
for every $\eta>0$, $u_\eta\in
C^\infty(\mathbb R^2)$, and that $u_\eta\to u$ uniformly on the compact sets of $\mathbb R^2$ as $\eta\to 0$,
by Proposition \ref{Proposition 1.6R} applied to $F_{f_0^{**}}$, we have
 
\begin{equation}\label{3.54CCEDA}
\int_{\omega^-_\eta}f_0^{**}(\nabla_\alpha u_\eta) dx_\alpha \leq \int_\o f_0^{\ast \ast}(\nabla_\alpha u) dx _\alpha \hbox{ for 
every } u\in W^{1,\infty}_{\rm loc}(\mathbb R^2),\hbox{ 
and }\eta>0.
\end{equation}

Let now $A \subset \subset \omega$ be a convex open set, and take $\eta>0$ sufficiently small in order to guarantee that $A \subset \subset \omega^-_\eta$. The convexity of $A$, Lemma \ref{Lemma 3.8R} and  \eqref{3.54CCEDA} provide
\begin{equation}\label{3.55CCEDA}
\begin{array}{ll}
\displaystyle{(F'')_-(u_\eta, A)\leq \int_A f_0^{\ast \ast}(\nabla_\alpha u_\eta)dx_\alpha \leq \int_{\omega^-_\eta}f_0^{\ast \ast}(\nabla_\alpha u_\eta)dx_\alpha \leq}\\
\\
\displaystyle{\int_\omega f_0^{\ast \ast}(\nabla u )dx_\alpha \hbox{ for every }\eta>0 \hbox{ sufficiently small.}}
\end{array}
\end{equation}

Passing to the limit in \eqref{3.55CCEDA} first as $\eta \to 0$, then letting $A$ invade $\o$, and exploiting the lower semicontinuity of $(F'')_-$ (see \cite[Remark 15.13]{D}) and the fact that $(F'')_-$ is an increasing set function, the thesis follows.  
\end{proof}

The proof of the upper bound inequality is concluded by the following lemma, in the spirit of \cite[Theorem 3.10]{CCEDA}

\begin{Lemma}\label{Theorem 3.10R} Let 
$f:\mathbb R^3\to[0,+\infty]$ be Borel,  
and $F''$ be given by \eqref{1.13''R}. Then
$$\displaystyle{F''(u,\o)\leq \int_\o f_0^{**}(\nabla_\alpha 
u)dx_\alpha \hbox{ for every }u\in W^{1,\infty}(\o), \o\in\A_0(\mathbb R^2)\hbox{ convex}.}$$
\end{Lemma}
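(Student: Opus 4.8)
The plan is to upgrade the estimate from the inner regular envelope $(F'')_-$ in Lemma~\ref{Lemma 3.9CCEDA} to $F''$ itself, at the same time relaxing the hypothesis from $u \in W^{1,\infty}_{\rm loc}(\mathbb R^2)$ to the restricted statement $u \in W^{1,\infty}(\o)$. The bound on $(F'')_-$ is already in hand, so the whole content of this final lemma is to remove the inner regularization. First I would recall the general fact, valid for the $\Gamma$-upper limit, that $F''(\cdot,\o)$ is \emph{inner regular} on the convex domains under consideration: because $\o$ is convex we have the identity $\sup\{F''(u,A): A \in \A_0(\mathbb R^2),\ A \hbox{ convex},\ \overline A \subseteq \o\} = F''_-(u,\o)$ recorded just before Lemma~\ref{Lemma 3.8R}, and this supremum over the convex exhausting family coincides with $F''(u,\o)$ precisely when $F''$ is inner regular in $\o$. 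Thus the crux is to establish $F''(u,\o) = F''_-(u,\o)$ for convex $\o$, after which \eqref{3.53CCEDA} immediately yields the claimed integral bound.

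The inner regularity of $F''$ is the step where the cylindrical dimension-reduction structure and the coercivity assumption on $f$ do the real work. I would argue as follows. Given $u \in W^{1,\infty}(\o)$ and a convex $A \ss \o$, take recovering sequences $\{u_\e\}$ on $A$ realizing $F''(u,A)$; the difficulty is to glue such a localized recovery into a global recovery on all of $\o$ without paying an uncontrolled surface cost near $\partial A$. The standard device is a De Giorgi--style slicing/cut-off argument: interpolate between the competitor $u_\e$ on the inner region and $u$ itself (or a fixed competitor for the thin boundary layer $\o \setminus A$) across a collar of small width, choosing among finitely many nested collars one on which the gradient of the interpolation, and hence the supremand, stays controlled. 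Because the functional is an $L^\infty$-type limit of integral functionals of the rescaled gradient, and because the coercivity \eqref{Ccoercivity}-type control guarantees that the energy of the boundary layer tends to zero as the layer shrinks, the glued sequence converges to $u$ uniformly on $\o$ and its energy does not exceed $F''(u,A)$ plus a vanishing error.

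The main obstacle I expect is controlling the interpolation in the anisotropic rescaled variable $\bigl(\nabla_\alpha u, \tfrac1\e \nabla_3 u\bigr)$: the cut-off introduces a transverse gradient that is amplified by $1/\e$, so a naive interpolation across the collar would blow up the $\nabla_3$ component. The remedy is to perform the gluing only in the in-plane variables $x_\alpha$, keeping the competitors independent of $x_3$ in the boundary layer so that $\nabla_3$ of the interpolant vanishes there and no $1/\e$ amplification occurs; this is exactly why the target functional $\overline F$ depends only on $\nabla_\alpha u$ and why one may take $x_3$-independent recovery sequences. Once the transverse direction is neutralized, the in-plane cut-off contributes only an ordinary gradient term that is absorbed by the shrinking area of the collar. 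I would close the argument by combining this inner regularity with the identity before Lemma~\ref{Lemma 3.8R} and the bound \eqref{3.53CCEDA}, obtaining
$$
F''(u,\o) = F''_-(u,\o) \le \int_\o f_0^{**}(\nabla_\alpha u)\, dx_\alpha
$$
for every $u \in W^{1,\infty}(\o)$ and every convex $\o \in \A_0(\mathbb R^2)$, which is the assertion of Lemma~\ref{Theorem 3.10R}.
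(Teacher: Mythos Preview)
Your plan has a genuine gap. The De Giorgi slicing/cut-off argument you outline requires some upper growth control on the integrand, and here $f:\mathbb R^3\to[0,+\infty]$ is merely Borel and may take the value $+\infty$ on a large set. When you interpolate $\varphi\, u_\e + (1-\varphi)\,u$ across a collar, the in-plane gradient of the glued competitor is
\[
\varphi\,\nabla_\alpha u_\e + (1-\varphi)\,\nabla_\alpha u + (\nabla_\alpha\varphi)(u_\e-u),
\]
and there is no reason for this vector (paired with the rescaled transverse component) to lie in ${\rm dom}\,f$; a single point outside the effective domain makes the collar energy $+\infty$, not small. Shrinking the collar does not help, and coercivity \eqref{Ccoercivity} is a \emph{lower} bound on $W$ in Theorem~\ref{myproof}, not an upper bound on $f$ in the present lemma, so it cannot force the layer energy to vanish. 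This is exactly the obstruction that the whole ``unbounded functionals'' machinery (Lemma~\ref{comeLemma3.6}, Propositions~\ref{Proposition 3.5R}--\ref{Proposition 3.7R}, Lemma~\ref{Lemma 3.8R}) is designed to circumvent; a naive gluing at the end would undo that work. Your remark that one may keep competitors $x_3$-independent in the boundary layer neutralises the $1/\e$ amplification only \emph{outside} the transition zone; inside it the term $\varphi\,\nabla_3 u_\e/\e$ persists, and in any case the in-plane cut-off contribution $(\nabla_\alpha\varphi)(u_\e-u)$ is already fatal.

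The paper bypasses inner regularity of $F''$ altogether and instead uses a dilation trick tailored to convex $\o$. Fix $x_0\in\o$ and for $t>1$ set $u_t(x_\alpha)=u\bigl(x_0+(x_\alpha-x_0)/t\bigr)$; then $\overline\o\subset x_0+t(\o-x_0)$, so
\[
F''(u_t,\o)\le (F'')_-\bigl(u_t,\,x_0+t(\o-x_0)\bigr)\le \int_{x_0+t(\o-x_0)} f_0^{**}(\nabla_\alpha u_t)\,dx_\alpha
\]
by Lemma~\ref{Lemma 3.9CCEDA}. A change of variables turns the right-hand side into $t^2\int_\o f_0^{**}(\nabla_\alpha u/t)\,dy_\alpha$, and convexity of $f_0^{**}$ (after reducing to $(0,0)\in{\rm ri}({\rm dom}\,f_0^{**})$) gives the bound $t\int_\o f_0^{**}(\nabla_\alpha u)\,dy_\alpha + t(t-1)|\o|\,f_0^{**}(0)$. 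Letting $t\downarrow 1$ and invoking the $L^\infty$-lower semicontinuity of $F''$ (since $u_t\to u$ uniformly) yields the claim. No cut-off is performed, so the unboundedness of $f$ never enters.
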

\begin{proof}[Proof] 

Since $\omega$ is convex there is no loss of generality in assuming $u \in W^{1,\infty}(\mathbb R^2)$. 
Analogously we can assume that ${\rm dom}f_0^{\ast \ast}$ is not empty.
If ${\rm dom}f_0^{\ast \ast}$ contains just one point the thesis follows by Proposition \ref{Proposition 3.5R}. 
Thus we can suppose also that $(0,0) \in {\rm dom}f_0^{\ast \ast}$, consequently the convexity of $f_0^{\ast \ast}$ entails that $(0,0) \in {\rm ri}({\rm dom}f_0^{\ast \ast})$.

\noindent If this was not the case we could assume that $u_0 \in \mathbb R^2$ is the element in ${\rm ri}({\rm dom}f_0^{\ast \ast})$, and the subsequent analysis could be repeated for the translated function $g_0^{\ast \ast}(z):= f_0^{\ast \ast}(z+ z_0)$ and the translated functional $F_{u_0}''(u, \omega):= F''(u_{z_0}+ z, \omega)$.

Let $x_0 \in \omega$ and $t>1$ and let $u_t$ be the function defined 
by $u_t: x_\alpha\in \mathbb R^2 \to u\left(x_0+\frac{x_\alpha-x_0}{t}\right)$, then $u_t \in W^{1,\infty}(\mathbb R^2)$ and $u_t \to u$ in $L^\infty(\omega)$. The convexity of $\omega$ and Lemma \ref{Lemma 3.9CCEDA} entail that
$$
\displaystyle{F''(u_t, \omega) \leq (F'')_-(u_t, x_0+ t (\omega -x_0)) \leq \int_{x_0 + t(\omega -x_0)} f_0^{\ast \ast}(\nabla_\alpha u_t) dx_\alpha.}
$$
By the change of variables $y_\alpha = x_0 + \frac{x_\alpha -x_0}{t}$ in the right hand side of the inequality above and by the convexity of $f_0^{\ast \ast}$ we have
$$
\begin{array}{ll}
\displaystyle{F''(u_t, \o) \leq t^2 \int_\omega f_0^{\ast \ast}\left(\frac{\nabla_\alpha u}{t}\right)d y_\alpha \leq t\int_\omega f_0^{\ast \ast}(\nabla_\alpha u) d y_\alpha +t^2\left(1-\frac{1}{t}\right)|\omega|f_0^{\ast \ast}(0).} 
\end{array}
$$
The thesis follows as $t \to 1$ by the lower semicontinuity of $F''$.
\end{proof}

\subsection{Theorem \ref{myproof}}\label{thm1.1}

This section is devoted to the proof of Theorem \ref{myproof}. The strategy of the proof is inspired by \cite{BGP}, besides the authors therein consider cell-formulas for the densities. On the other hand, in our context we can avoid the introduction of cell-formulas, and we can deal just with simpler formulas which explicitly provide a representation and exhibit the link with the original densities. 
We start by recalling some properties of the functions involved in formula \eqref{Gammaesssup} (for more details we refer to \cite[Section 6]{BPZ}).

\begin{itemize}
\item[i)] If $W:\mathbb R^3 \to [0, +\infty)$ is continuous and level convex, then $W_0$ is level convex as well.
\item[ii)] If $W: \mathbb R^3 \to [0, +\infty)$ is continuous, then 
$(W_0)^{\rm lc}(z)= (W^{\rm lc})_0(z)$ for every $z \in \mathbb R^2$.
\item[iii)] $(W_0)^{\rm lc}$ is lower semicontinuos.  
\end{itemize}
\begin{proof}[Proof of Theorem \ref{myproof}]
The proof of the lower bound inequality, i.e. the fact that for any $u\in C({\overline \Omega})$ it results
$$
\displaystyle{F_0(u) \leq \liminf_{\e \to 0}F_\e(u_\e)}
$$
whenever $\{u_\e\}\subset C({\overline \Omega})$ is uniformly converging to $u$ in ${\overline \Omega}$, is exactly as in the proof of \cite[Theorem 6.1]{BPZ}.

For what  concerns the upper bound we observe that \cite[Theorem 2.6]{P} guarantees that there is no loss of generality in assuming $W$ already level convex and the right hand side of \eqref{Gammaesssup} finite.

The result will be achieved through a repeated application of Theorem \ref {thmDAFL-Z}.

Let $u \in W^{1,\infty}(\omega)$ and let $\displaystyle{M:= {\rm ess}\sup_\omega (W_0)^{\rm lc}(\nabla_\alpha u)}$ and consider  the functionals 
$$u\in W^{1,\infty}( \Omega) \to {\cal I_C}(u)=
\displaystyle{\int_\Omega I_C\left(\nabla_\alpha u,\frac{1}{\varepsilon}\nabla_3 u\right)dx}
$$ where $I_{C}$ is the indicator function of $C= \left\{z \in \mathbb R^3: W(z)\leq M\right\}.$ 
Since $W$ is continuous and level convex it  is easily observed that $C$  is closed and convex. \eqref{Ccoercivity} entails that $C$ is also bounded.
Consequently $I_C$ is lower semicontinuous. These facts allow us to apply Theorem \ref{thmDAFL-Z} which provides \begin{equation}\label{Gammalimit}
\displaystyle{\Gamma(L^\infty)-\lim_{\e \to 0} {\cal I_C}(u_\e)=
2\int_\omega (({I_C})_0)^{\ast \ast}(\nabla_\alpha u)dx_\alpha }
\end{equation}
if $u \in W^{1,\infty}(\omega)$.

It is crucial to observe that if $C_0=\{z \in \mathbb R^2: (W_0)^{\rm lc}(z)\leq M\}$, then $I_{C_0}(z)=  (({I_C})_0)^{\ast \ast}(z)$. 

\noindent In fact, the assumptions on $W$ and ii) imply that $(W_0)^{\rm lc}=W_0$, moreover \eqref{1.30R} entails that 
$$C_0 = {\rm Pr}_2 C.$$
The last equality, \eqref{1.31R} and the fact that $C$ is closed and convex entail that 
$$
{\rm dom}(((I_{C})_0)^{\ast \ast})= {\rm dom}(I_{C_0}).
$$
\eqref{Gammalimit} guarantees the existence of a sequence $\{u_\varepsilon\}_\varepsilon$ such that $u_\varepsilon \to u$ uniformly and 

$$
\displaystyle{\limsup_{\e \to 0}\int_\Omega I_C\left(\nabla_\alpha u_\e,\frac{1}{\e}\nabla_3 u_\e \right)dx \leq 2 \int_\omega I_{C_0}(\nabla_\alpha u)dx_\alpha},
$$
thus we can say that there exists $\e_0>0$ such that for every $\e < \e_0$
$$
\displaystyle{I_C\left(\nabla_\alpha u_\e, \frac{1}{\e}\nabla_3 u_\e\right)=0 \hbox{ for a.e. }x \in \Omega},
$$
i.e. for every $\e <\e_0$ (cf. \eqref{remunb})
$$
\displaystyle{\supess_\Omega W\left(\nabla_\alpha u_\e, \frac{1}{\e}\nabla_3 u_\e\right) \leq M \hbox{ for a.e. } x \in \Omega,}
$$
equivalently
$$\limsup_{\varepsilon \to 0} \supess_{\Omega}W\left(\nabla_\alpha u_\varepsilon, \frac{1}{\varepsilon}\nabla_3 u_\varepsilon\right)\leq  \supess_\omega (W_0)^{\rm lc}(\nabla_\alpha u).
$$
This concludes the proof.

\end{proof}
 \textbf{Acknowledgments.}

The author wish to thank Irene Fonseca. The author is also deeply indebted with Riccardo De Arcangelis, to whose memory this note is dedicated.

\end{document}